\newcommand{\supp}{\operatorname{supp}}
\newcommand{\Prob}{\operatorname{Prob}}
\newtheorem{thm}{Theorem}[section]
\newtheorem{cor}[thm]{Corollary}
\newtheorem{lem}[thm]{Lemma}
\newtheorem{definition}[thm]{Definition}
\newtheorem{remark}[thm]{Remark}
\newtheorem{proposition}[thm]{Proposition}
\theoremstyle{definition}
\begin{document}

\title[]{Singular subgroups in $\widetilde{A}_2$-groups and their von Neumann algebras}

\author{Yongle Jiang}\thanks{Y. Jiang was partially supported by the NCN (National Centre of Science) grant
2014/14/E/ST1/00525.}
\author{Piotr W. Nowak}\thanks{This project has received funding from the European Research Council (ERC) under the European Union's Horizon 2020 research and innovation programme (grant agreement no. 677120-INDEX)}
\address{Institute of Mathematics of the Polish Academy of Sciences, Warsaw, Poland}
\email{yjiang@impan.pl}
\email{pnowak@impan.pl}
\date{\today}
\maketitle

\begin{abstract}
We show that certain amenable subgroups inside $\widetilde{A}_2$-groups are singular in the sense of Boutonnet and Carderi. 
This gives a new family of examples  of singular group von Neumann subalgebras. 
We also give a geometric proof that if $G$ is an acylindrically hyperbolic group, $H$ is an infinite amenable subgroup containing 
a loxodromic element, then $H<G$ is singular. Finally, we present (counter)examples to show both situations happen concerning 
maximal amenability of $LH$ inside $LG$ if $H$ does not contain
loxodromic elements.     
\end{abstract}

\section{Introduction}

Let $M$ be a finite von Neumann algebra, $N$ be a von Neumann subalgebra of $M$ and denote by $\mathbb{E}_N$ the trace-preserving conditional expectation from $M$ onto $N$. A classical topic in von Neumann algebras is to study the relative position of $N$ inside $M$. There are two closely related notions to describe the relative position of $N$ inside $M$. One is singularity and the other one is maximal amenability.

Recall that $N$ is called \emph{singular} in $M$ \cite{Dix} if the normalizer of $N$, i.e. $\mathcal{N}(N):=\{u\in \mathcal{U}(M): uNu^*=N\}$, is contained in $N$. In general, it is not easy to decide whether given subalgebras, e.g. maximal abelian subalgebras (masas), are singular and this prompted Sinclair and Smith to introduce, a priori, stronger notion of singuality, which was called  strongly singularity in \cite{SS}. Recall that $N$ is said to be \emph{strongly singular}  if, for every unitary $u\in M$
\[\sup_{||x||\leq 1}||(\mathbb{E}_N-\mathbb{E}_{uNu^*})x||_2\geq ||(Id-\mathbb{E}_N)u||_2,\] 
where $||\cdot||_2$ denotes the $L^2$-norm associated with a prescribed faithful normal trace on $M$. Although the definition is more involved, it is easier to check, especially for group von Neumann subalgebras. For example, certain subgroups of hyperbolic groups are shown to give rise to strongly singular von Neumann subalgebras in \cite{SS}. Moreover, it was shown in \cite{SSWW}  that a singular masa is in fact also strongly singular for a separable II$_1$ factor $M$. 

Besides singularity, one also studies maximal amenability. Recall that $N$ is \emph{maximal amenable} in $M$ if $N$ is amenable and there are no amenable subalgebras in $M$ that strictly contain $N$.

Clearly, a maximal amenable von Neumann subalgebra is automatically singular. Although every nonamenable von Neumann algebra $M$ contains maximal amenable von Neumann subalgebras by Zorn's lemma, it is rather difficult to construct concrete examples of maximal amenable von Neumann subalgebras. 

The first such a concrete example is due to Popa. In \cite{popa_adv} Popa proved that the abelian von Neumann subalgebra generated by one of the generators of the non-abelian free group $F_n$, i.e. the generator masa, is maximal injective in the free group factor $L(F_n)$.
One ingredient in his proof is the so-called  ``asymptotic orthogonality property" for the generator masas inside $L(F_n)$. 
This method was later applied elsewhere, see e.g. \cite{remi_carderi1, hou}.

More recently, new techinques introduced in \cite{remi_carderi2} allowed to obtain more explicit examples of maximal amenable group von Neumann 
subalgebras that come from infinite maximal amenable subgroups.
This strategy is best suited for groups acting, in an appropriately regular way, on geometric objects and includes hyperbolic groups, many semisimple Lie groups of higher rank 
such as $\operatorname{SL}_3(\mathbb{Z})$.

In \cite{RS1996, RSS}  for groups acting on geometric objects, e.g. affine buildings, certain subgroups are shown to give rise to strongly singular von Neumann subalgebras. If we regard the homogeneous tree as a one dimensional affine building of type $\widetilde{A}_1$, then the degenerate case of the results in \cite{RS1996, RSS} states that the generator masas in $L(F_n)$ are strongly singular. Hence, it is natural to ask whether results in \cite{RS1996, RSS} can be strengthened to show the von Neumann subalgebras are actually maximal amenable.
The main result of our paper is an affirmative answer to this question, see Theorem \ref{mainthm}, Corollary \ref{cor: mainthm for A_2-groups}. The proof is based on the geometric approach in \cite{remi_carderi2} and previous work in \cite{RSS}.

We organize this paper as follows. In Section \ref{section: preliminaries}, we recall necessary background on affine buildings that are used in our main theorem. In Section \ref{section: on acylindrical hyperbolic groups}, we apply the geometric approach to acylindrically hyperbolic groups, which inspired questions studied in Section \ref{section: remove loxodromic elements assumption}. 

\section{Preliminaries and main theorem}\label{section: preliminaries}

\subsection{Affine buildings}
Let us briefly recall several standard facts on affine buildings, we refer the readers to \cite{brown, ronan} for more details.

Let $\Delta$ be an affine building. By $\Delta^0$ we denote its set of vertices.
Similarly, let $\mathcal{A}$ be an apartment, then $\mathcal{A}^0$ denotes its vertices.
We consider the boundary $\partial \Delta=X$, which is defined as the equivalence classes of sectors in $\Delta$. Recall that two sectors are \emph{equivalent} (or parallel) if their intersection contains a sector. Fix a special vertex $x\in \Delta$, for any $\omega\in\Omega$, there is a unique sector $[x, \omega)$ in the class $\omega$ having base vertex $x$ \cite[Theorem 9.6, Lemma 9.7]{ronan}. The \emph{boundary} $\Omega$ is defined to be the set of equivalent classes of sectors in $\Delta$. $\Omega$ also has the structure of a spherical building \cite[Theorem 9.6]{ronan} and topologically, $\Omega$ is a totally disconnected compact Hausdorff space and a basis for the topology is given by the set of the form $\Omega_x(v)=\{\omega\in\Omega: [x, \omega)~\mbox{contains}~v\}.$

Two boundary points $\omega, \bar{\omega}$ in $\Omega$ are said to be \emph{opposite} if the distance between them is the diameter of the spherical building $\Omega$. This is equivalent to the property that they are represented by opposite sectors $S, \bar{S}$ with the same base vertex in some apartment of $\Delta$ by \cite[Lemma 3.5]{JR}. 

For an $\omega$ in $\Omega$, we can define $\mathcal{O}(\omega)$ to be the set of all $\omega'\in\Omega$ such that $\omega'$ is opposite to $\omega$. Note that $\mathcal{O}(\omega)$ is an open set. Moreover, if $\omega\in \Omega$ and $\mathcal{A}$ is an apartment in $\Delta$, then there exists a boundary point $\bar{\omega}$ of $\mathcal{A}$ such that $\bar{\omega}$ is opposite $\omega$ \cite[Lemma 3.2]{JR}. As a corollary, if $\omega_1,\ldots, \omega_n$ are the boundary points of an apartment, then $\Omega=\mathcal{O}(\omega_1)\cup\cdots\cup \mathcal{O}(\omega_n)$ \cite[Corollary 3.3]{JR}.

Motivated by \cite[Section 5]{RSS} we fix a group $G$ of 
automorphisms of an affine building $\Delta$ with boundary $\Omega$ satisfying the following properties.
\begin{itemize}
\item[(B1)] $G$ acts freely on the vertex set $\Delta^0$ with finitely many vertex orbits (i.e. cocompactly).
\item[(B2')]  There is an apartment $\mathcal{A}$ in $\Delta$ and an amenable subgroup $H$ of $G$ such that $H$ preserves $\mathcal{A}$ and $H\setminus \mathcal{A}$ is compact, i.e. $\mathcal{A}$ is a \emph{periodic} apartment. In particular, $H\setminus \mathcal{A}^0$ is finite, where $\mathcal{A}^0$ is the vertex set of $\mathcal{A}$.
\item[(B3)] The natural mapping $H\setminus \mathcal{A}^0\to G\setminus \Delta^0$ is injective.
\end{itemize}

\begin{remark}\normalfont
(1) In \cite[Section 5]{RSS}, an almost identical set of conditions is introduced. The only difference is condition (B2) therein, which was stated as follows (using our notations): There is an apartment $\mathcal{A}$ in $\Delta$ and an abelian subgroup $H$ of $G$ such that $H\setminus \mathcal{A}^0$ is finite, where $\mathcal{A}^0$ is the vertex set of $\mathcal{A}$. It was made clear in \cite[Remark 5.1(b)]{RSS} that the sole reason for assuming that $H$ is abelian in condition (B2) is to obtain an abelian von Neumann algebra $LH$. Everything else works equally well without this assumption. Here, we use notation (B2') to distinguish it from condition (B2). Note that both (B2) and (B2') are checked and applied for the same type of examples.\\
(2) As observed in \cite[Lemma 5.2]{RSS}, since $G$ acts freely on $\Delta^0$, condition (B3) guarantees that for any $g\in G$, $g\mathcal{A}^0\cap \mathcal{A}^0\neq\emptyset$ implies $g\in H$.\\
(3) The above notion of periodic apartments was called ``doubly periodic apartments" in \cite{RR, RS1996}; while in \cite{JR, RSS}, it was simply called periodic apartments.  
\end{remark}

Let $\mathcal{A}$ be the periodic apartment appeared in condition (B2'), and fix a special vertex $z$ in $\mathcal{A}$. As explained in \cite[P. 207]{JR}, we choose a pair of opposite sectors $W^+, W^-$ in $\mathcal{A}$ based at $z$ and denote by $\omega^{\pm}$ the boundary points represented by $W^{\pm}$, respectively. By periodicity of $\mathcal{A}$, there is a periodic direction represented by a line $L$ in the sector direction of $W^+$. This means that there exists some element $u\in G$ which leaves $L$ invariant and translates the apartment $\mathcal{A}$ in the direction of $L$. Then $u^n\omega^+=\omega^+, u^n\omega^-=\omega^-$ for all $n\in \mathbb{Z}$.  

One ingredient for our proof is \cite[Proposition 3.7]{JR}, which shows that $\omega^-$ is an attracting fixed point for $u^{-1}$. We state it below (using our notations) for readers' convenience.
 
\begin{proposition}[Proposition 3.7 in \cite{JR}]\label{prop: prop 3.7 in JR's paper}
Let $G$ acts properly and cocompactly on an affine building $\Delta$ with boundary $\Omega$. Let $\mathcal{A}$ be a periodic apartment and choose a pair of opposite boundary points $\omega^{\pm}$. Let $u\in G$ be an element which translates the apartment $\mathcal{A}$ in the direction of $\omega^+$. Then $u^{-1}$ attracts $\mathcal{O}(\omega^+)$ towards $\omega^-$; that is, for each compact subset $K$ of $\mathcal{O}(\omega^+)$ we have $\lim_{n\to\infty}u^{-n}(K)=\{\omega^-\}$.
\end{proposition}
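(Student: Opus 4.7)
Since the basic open sets $\Omega_z(v)$ for $v\in W^-\cap\mathcal{A}^0$ form a neighborhood basis at $\omega^-$, it suffices to fix a vertex $v\in W^-$ and show that $u^{-n}(K)\subset\Omega_z(v)$ for all large $n$, with the bound uniform in $\omega'\in K$. Rewriting $u^{-n}\omega'\in\Omega_z(v)$ as $v\in[z,u^{-n}\omega')$ and applying the automorphism $u^n$, this is equivalent to locating $u^n v$ inside the sector $[u^n z,\omega')$ based at the vertex $u^n z$. The benefit of this move is that the moving target $u^{-n}\omega'$ is replaced by the fixed direction $\omega'$, at the cost of pushing the base vertex $z$ far out into $W^+$.

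I would then identify $\mathcal{A}$ with Euclidean space so that $z=0$, the sector $W^+$ is a Weyl chamber at the origin, and $u$ acts as translation by a vector $t$ in the closed cone spanned by $W^+$. Writing $v=-s$ with $s\in W^+$, one has $u^n z=nt$ and $u^n v=nt-s$. For all $n$ large enough (depending only on $s$, hence only on $v$), the point $u^n v$ lies in the ``initial parallelepiped'' $W^+\cap(u^n z-W^+)$, which is precisely the part of the opposite sector $[u^n z,\omega^-)$ that remains inside $W^+$.

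To bring $\omega'$ into the picture, the characterization of opposite boundary points in an affine building supplies, for each $\omega'\in\mathcal{O}(\omega^+)$, an apartment $\mathcal{A}'=\mathcal{A}'(\omega')$ and a vertex $y(\omega')\in[z,\omega^+)$ such that $[y(\omega'),\omega^+)$ and $[y(\omega'),\omega')$ are opposite sectors in $\mathcal{A}'$ at $y(\omega')$; in particular $\mathcal{A}$ and $\mathcal{A}'$ share the sub-sector $[y(\omega'),\omega^+)\subset W^+$. Compactness of $K$ yields a basic neighborhood $\Omega_z(w_0)$ of $\omega^+$ disjoint from $K$, which forces the ``branching'' $y(\omega')$ to sit between $z$ and $w_0$, giving the uniform bound $d(z,y(\omega'))\leq d(z,w_0)$ over $\omega'\in K$. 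Once $n$ is large enough that $u^n z$ lies past $y(\omega')+s$ in the chamber order (a uniform bound over $K$), the point $u^n v=u^n z-s$ sits inside the shared sub-sector $[y(\omega'),\omega^+)$. In each of the two apartments the sector opposite $[u^n z,\omega^+)$ at $u^n z$ is a translate of $-W^+$ by $u^n z$ computed using the respective affine structures; since those structures agree on the shared sub-sector, the single point $u^n v$ lies simultaneously in $[u^n z,\omega^-)\subset\mathcal{A}$ and in $[u^n z,\omega')\subset\mathcal{A}'$. This is the required containment, uniformly in $\omega'\in K$.

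The main obstacle I anticipate is the identification of the initial portions of the two sectors $[u^n z,\omega^-)\subset\mathcal{A}$ and $[u^n z,\omega')\subset\mathcal{A}'$ as the same subset of $\Delta$: they diverge once they leave $W^+$, but within the parallelepiped of size $|s|$ at $u^n z$ they must coincide. Making this precise relies on the building-theoretic fact that two apartments sharing a sub-sector induce the same Weyl-chamber structure at every interior vertex of that sub-sector, a fact I would extract from the standard references \cite{brown,ronan}. The uniform control of the branching vertex $y(\omega')$ over $\omega'\in K$ via compactness is then what upgrades the resulting pointwise convergence to convergence of the set $u^{-n}(K)$ to $\{\omega^-\}$.
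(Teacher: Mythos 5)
Your overall line of attack is the right one, and is essentially the argument of \cite[Proposition 3.7]{JR} (which the present paper only quotes): the equivalence $u^{-n}\omega'\in\Omega_z(v)\Leftrightarrow u^nv\in[u^nz,\omega')$, the apartment $\mathcal{A}'$ spanned by the opposite pair $\omega^+,\omega'$, and the fact that the affine structures of $\mathcal{A}$ and $\mathcal{A}'$ agree on the shared subsector, so that $u^nv=u^nz-s$ lies in $[u^nz,\omega')$ once it lies past the branch vertex $y(\omega')$ — all of this is correct (modulo the standing assumption, which you should state, that $u$ translates in a \emph{regular} direction, i.e.\ in the interior of the cone of $W^+$, not merely in the closed cone; otherwise $nt-s$ need not eventually dominate $y(\omega')+s$). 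The genuine gap is the uniformity step: from $\Omega_z(w_0)\cap K=\emptyset$ you cannot conclude $d(z,y(\omega'))\leq d(z,w_0)$ for $\omega'\in K$. That implication is a rank-one (tree) phenomenon. In rank $\geq 2$ a boundary point can fail to lie in \emph{any} $\Omega_z(w_0)$, $w_0\neq z$, while the apartment it spans with $\omega^+$ meets $W^+$ only very far from $z$. Concretely, in the $\widetilde{A}_2$ building of $SL_3(\mathbb{Q}_p)$ take $z=[\mathbb{Z}_p^3]$, $\omega^+$ the flag $\langle e_1\rangle\subset\langle e_1,e_2\rangle$, and $\omega'_c$ the flag $\langle e_3\rangle\subset\langle e_3,\,e_2+ce_1\rangle$ with $v(c)=-R<0$. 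Then $\omega'_c$ is opposite $\omega^+$, the sector $[z,\omega'_c)$ meets $W^+$ only in $z$ (so $\omega'_c\notin\Omega_z(w_0)$ for every vertex $w_0\neq z$ of $W^+$), yet the apartment $\mathcal{A}'$ determined by $\omega^+,\omega'_c$ has frame $\{\langle e_1\rangle,\langle e_2+ce_1\rangle,\langle e_3\rangle\}$ and $\mathcal{A}'\cap W^+$ is the subsector based at distance about $R$ from $z$, with $R$ arbitrary. So a single basic neighborhood of $\omega^+$ disjoint from $K$ gives no control whatsoever on $y(\omega')$.

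The uniform bound you need is nevertheless true, but it requires the actual compactness argument: show that for each $N$ the set $K_N$ of those $\omega'\in\mathcal{O}(\omega^+)$ whose branch vertex can be chosen with $d(z,y(\omega'))\leq N$ is open (indeed compact open); then a compact $K\subset\mathcal{O}(\omega^+)=\bigcup_N K_N$ is contained in some $K_N$, and your estimate becomes uniform. (In the example above this is not contradicted: as $R\to\infty$ the points $\omega'_c$ leave every compact subset of $\mathcal{O}(\omega^+)$, converging to the non-opposite flag $\langle e_3\rangle\subset\langle e_1,e_3\rangle$.) This openness of the sets $K_N$ is exactly the content of the exhaustion $K_0\subset K_1\subset K_2\subset\cdots$ from the proof of \cite[Proposition 3.7]{JR} that the paper recalls immediately after the statement; it is the heart of the proof and is the piece missing from, and misargued in, your proposal.
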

We note that during the proof of this proposition, the authors introduced an increasing family of compact open sets $K_0\subset K_1\subset K_2\subset \cdots$ such that $\cup_{N=0}^{\infty}K_N=\mathcal{O}(\omega^+)$ and they actually proved that $\lim_{n\to\infty}u^{-n}(K_N)=\{\omega^-\}$ for each $N\geq 0$.

\subsection{$\widetilde{A}_2$-groups}
An $\widetilde{A}_2$-group acts simply transitively on the vertices of an affine building of type $\widetilde{A}_2$. Such groups were studied in \cite{CMSZ, CMSZ2} through a combinatorial description, i.e. the so-called \emph{triangle presentation}. 
We would not recall the definition here, but refer the readers to \cite[Introduction]{RS1996} for a clean presentation and the above papers for details. 

$\widetilde{A}_2$-groups have Kazhdan's property (T) by \cite[Theorem 4.6]{CMS} and operator algebras associated with $\widetilde{A}_2$-groups were studied extensively, see e.g. \cite{RR, RS1996, JR, RSS, R2006}.

\subsection{Singular subgroups}
Let us recall the notion of singular subgroups as introduced in \cite{remi_carderi2}.

\begin{definition}\label{definition: singular subgroup}
Consider an amenable subgroup $H$ of a discrete countable group $G$. Suppose that $G$ acts continuously on the compact space $X$. 
We say that $H$ is singular in $G$ (with respect to $X$) if for any $H$-invariant probability measure $\mu$ on $X$ 
and $g \in G\setminus H$ we have  $g\cdot \mu \perp \mu$.
\end{definition}

For convenience we will denote by $\Prob_H(X)$ the space of $H$-invariant probability measures on $X$.

It turns out that with the presence of singularity, an amenable subgroup is automatically maximal amenable \cite[Lemma 2.2]{remi_carderi2}. More importantly, this fact is also witnessed at the level of von Neumann algebras as shown in the following theorem.

\begin{thm}[Theorem 2.4 in \cite{remi_carderi2}]
Suppose $G$ is a discrete countable group admitting an amenable, singular subgroup $H$. Then for any trace preserving action $G\curvearrowright (Q, \tau)$ on a finite amenable von Neumann algebra, $Q\rtimes H$ is maximal amenable inside $Q\rtimes G$.
\end{thm}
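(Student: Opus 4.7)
The plan is to prove the contrapositive: assuming there is an amenable von Neumann subalgebra $P$ with $Q\rtimes H\subsetneq P\subseteq Q\rtimes G$, I will produce an $H$-invariant probability measure $\mu\in\Prob_H(X)$ together with some $g\in G\setminus H$ satisfying $g\cdot\mu\not\perp\mu$, directly contradicting Definition~\ref{definition: singular subgroup}. Note first that $Q\rtimes H$ is itself amenable, being the crossed product of an amenable von Neumann algebra by an amenable group action, so the hypothesis $Q\rtimes H\subsetneq P$ is at least consistent with $P$ being amenable.

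Write $M:=Q\rtimes G$ and assume $P$ exists. Let $\mathcal{H}_X$ be a Hilbert space carrying a faithful representation of the reduced crossed product $C(X)\rtimes_r G$, and consider the amplified Hilbert space $\mathcal{K}:=L^2(M)\otimes\mathcal{H}_X$, on which $M$ acts via the diagonal representation $u_g\mapsto u_g\otimes u_g$. The map $\pi\colon C(X)\to B(\mathcal{K})$, $f\mapsto 1\otimes f$, is then $G$-equivariant in the sense that $u_g\pi(f)u_g^{*}=\pi(g\cdot f)$. Injectivity of $P$ supplies a conditional expectation $\Phi\colon B(\mathcal{K})\to P$, and setting $\phi:=\tau\circ\Phi$ yields a $P$-central state on $B(\mathcal{K})$ that restricts to $\tau$ on $M$. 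Pulling $\phi$ back along $\pi$ defines a Borel probability measure $\mu$ on $X$, and since every $u_h$ for $h\in H$ lies in $Q\rtimes H\subseteq P$, $P$-centrality forces $\mu(h\cdot f)=\phi(u_h\pi(f)u_h^{*})=\phi(\pi(f))=\mu(f)$; hence $\mu\in\Prob_H(X)$.

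To close the argument, pick $x\in P\setminus(Q\rtimes H)$ and consider its Fourier expansion $x=\sum_g a_gu_g$ with $a_g=\mathbb{E}_Q(xu_g^{*})$; some coefficient $a_{g_0}$ with $g_0\in G\setminus H$ is nonzero. Applying $P$-centrality to products of the form $x\pi(f)x^{*}$ and $x^{*}x\pi(f)$, and expanding via the equivariance identity $u_g\pi(f)u_g^{*}=\pi(g\cdot f)$, one should extract, after handling the off-diagonal cross-terms $a_g\pi(g\cdot f)u_gu_{g'}^{*}a_{g'}^{*}$ via polar decomposition and an approximation argument, an identity between $\mu$ and a linear combination of the translates $g\cdot\mu$ which prevents $\mu$ and $g_0\cdot\mu$ from being mutually singular. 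The main obstacle will be precisely this final extraction: at the purely group-theoretic level the analogous implication is immediate because one could just take $x=u_{g_0}$ and read off $g_0\cdot\mu=\mu$, whereas in the algebra setting $x$ is a genuine $L^2$-convergent operator-valued sum and one must rule out destructive cancellations among the cross-terms before concluding that the nonvanishing of the single Fourier coefficient $a_{g_0}$ really forces a nontrivial common mass between $\mu$ and $g_0\cdot\mu$.
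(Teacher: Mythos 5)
The paper does not prove this statement itself---it quotes it from \cite{remi_carderi2}---so your attempt should be measured against the argument given there. Your first half is essentially that argument: using amenability of $P$ (better: the hypertrace characterization, which directly gives a $P$-central state $\phi$ on $B(\mathcal{K})$ with $\phi|_M=\tau$; your $\phi=\tau\circ\Phi$ is not obviously trace-restricting on $M$, a fixable but real imprecision), restricting $\phi$ along the equivariant copy of $C(X)$ to get $\mu$, and using $u_h\in P$ to get $\mu\in\Prob_H(X)$. This is correct and standard.

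The genuine gap is exactly where you flag it: the passage from a nonzero Fourier coefficient $a_{g_0}$, $g_0\in G\setminus H$, to $g_0\cdot\mu\not\perp\mu$. The route you sketch---expanding $x=\sum_g a_gu_g$ inside $\phi$ and controlling cross-terms---does not work as stated, because $\phi$ is only a state (not normal), so the $\|\cdot\|_2$-convergent Fourier series cannot be exchanged with $\phi$, and the cancellation problem you anticipate is real. The cited proof avoids the expansion entirely by testing one coefficient at a time against the trace: assuming singularity (so $g_0\cdot\mu\perp\mu$), choose $f\in C(X)$ with $0\le f\le1$, $\mu(f)>1-\epsilon$ and $(g_0\cdot\mu)(f)<\epsilon$; then write
$\|a_{g_0}\|_2^2=\tau\bigl((a_{g_0}u_{g_0})^*x\bigr)=\phi\bigl((a_{g_0}u_{g_0})^*x\bigr)$,
insert $\pi(f)+\pi(1-f)$, use $P$-centrality once to move $x$ across, and apply the Cauchy--Schwarz inequality for $\phi$ to each of the two terms. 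The equivariance $u_{g_0}^*\pi(f)u_{g_0}=\pi(g_0^{-1}\cdot f)$ turns one term into an integral against $g_0\cdot\mu$, which is $O(\sqrt{\epsilon})$, while the other is controlled by $\mu(1-f)$, also $O(\sqrt{\epsilon})$; hence $a_{g_0}=0$ for every $g_0\notin H$, i.e.\ $P\subseteq Q\rtimes H$. Note that this argument only ever evaluates $\phi$ on a single element of $M$ multiplied by fixed elements of $C(X)$, so no approximation of $x$ inside $\phi$ and no handling of off-diagonal terms is needed; without some such localization your final step remains an unproved assertion, so the proposal as written is incomplete.
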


\subsection{Main theorem and its proof}
Now, we are ready to state our main theorem, which is a strengthening of \cite[Theorem 5.8]{RSS}.

\begin{thm}\label{mainthm}
Let $G$ be a group of automorphisms of a locally finite affine building $\Delta$ with boundary $\Omega$. Assume that (B1), (B2'), (B3) hold and $H\subseteq G$ 
is as described in condition (B2'). Then $H$ is a singular subgroup in $G$.
\end{thm}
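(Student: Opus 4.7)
Let $\mu$ be an $H$-invariant probability measure on $\Omega$ and fix $g\in G\setminus H$; the goal is to show $g\mu\perp \mu$. My strategy is to adapt the dynamical approach of Boutonnet--Carderi to the affine building setting, with Proposition~\ref{prop: prop 3.7 in JR's paper} playing the role of the loxodromic north--south dynamics used in the hyperbolic case. The argument naturally splits into two steps: (i) showing that every $H$-invariant probability measure on $\Omega$ must be concentrated on the finite set $\partial\mathcal{A}\subset\Omega$ of boundary points of the periodic apartment $\mathcal{A}$, and (ii) showing that for $g\in G\setminus H$ one has $g\cdot\partial\mathcal{A}\cap\partial\mathcal{A}=\emptyset$, so that $\supp(\mu)$ and $\supp(g\mu)$ are disjoint.

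For step~(i), since $H$ is amenable and acts cocompactly on the Euclidean apartment $\mathcal{A}$, Bieberbach's theorem provides a finite-index translation subgroup $T\leq H$ that is a full-rank lattice in the translation group of $\mathcal{A}$. A full-rank lattice meets every open Weyl sector, so for each $\omega\in\partial\mathcal{A}$ there is $u_\omega\in T\subseteq H$ translating $\mathcal{A}$ in the direction of $\omega$. Proposition~\ref{prop: prop 3.7 in JR's paper} then gives that for each compact $K\subseteq\mathcal{O}(\omega)$ and each open neighborhood $V$ of the opposite boundary point $\omega^-\in\partial\mathcal{A}$, one has $u_\omega^{-n}(K)\subseteq V$ for $n$ sufficiently large. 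Combined with $u_\omega^{n}\mu=\mu$ this yields $\mu(K)\leq \mu(V)$; shrinking $V$ to $\{\omega^-\}$ (using that $\Omega$ has a countable basis of compact--open neighborhoods at each point) and exhausting $\mathcal{O}(\omega)$ by compacts gives $\mu(\mathcal{O}(\omega))\leq \mu(\{\omega^-\})$. Summing over $\omega\in\partial\mathcal{A}$ against the cover $\Omega=\bigcup_{\omega\in\partial\mathcal{A}}\mathcal{O}(\omega)$ forces $\mu$ to be supported on $\partial\mathcal{A}$.

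Step~(ii) is purely geometric. Suppose some $\omega\in\partial\mathcal{A}$ satisfies $g\omega\in\partial\mathcal{A}$; then the apartments $\mathcal{A}$ and $g\mathcal{A}$ both carry a sector in the class $g\omega$, so they contain parallel sectors $S\subset\mathcal{A}$ and $S'\subset g\mathcal{A}$. By the definition of parallelism, $S\cap S'$ contains a common subsector, so $\mathcal{A}\cap g\mathcal{A}$ contains infinitely many vertices, and the remark after (B3) then forces $g\in H$, contradicting our assumption. Hence $g\partial\mathcal{A}\cap\partial\mathcal{A}=\emptyset$, and together with (i) this gives $\supp(g\mu)\cap\supp(\mu)=\emptyset$, i.e.\ $g\mu\perp\mu$.

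The step I expect to be the main obstacle is (i): extracting quantitative mass concentration from Proposition~\ref{prop: prop 3.7 in JR's paper} requires both the Bieberbach-type input providing translations in \emph{every} Weyl direction (so that the cover by the $\mathcal{O}(\omega)$ can be fully exhausted) and a careful measure-theoretic limit passing from the compact-$K$/open-$V$ sandwich to the singleton $\{\omega^-\}$. Step~(ii), being essentially a direct geometric consequence of (B3) together with the standard fact that parallel sectors share a subsector, should cause little difficulty.
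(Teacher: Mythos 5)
Your proposal is correct, and its overall architecture coincides with the paper's: both proofs reduce singularity to the two claims that (a) every $H$-invariant probability measure is concentrated on the finite set $\{w_1,\dots,w_k\}$ of boundary points of $\mathcal{A}$, and (b) for $g\in G\setminus H$ the set $g\cdot\{w_1,\dots,w_k\}$ is disjoint from $\{w_1,\dots,w_k\}$; your step~(ii) is literally the paper's argument (parallel sector representatives force $g\mathcal{A}^0\cap\mathcal{A}^0\neq\emptyset$, hence $g\in H$ by (B3)). Where you genuinely diverge is in the proof of (a). The paper takes a single element $u\in G$ translating $\mathcal{A}$ in the direction of $w_1$ (whose existence is quoted from the periodicity discussion in \cite{JR, RSS}), proves separately that $u\in H$ via (B3), and then derives a contradiction by producing infinitely many pairwise disjoint translates $u^{-n_i}(N_w\cap\mathcal{O}(w_1)\cap K_n)$ of a set of positive measure. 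You instead apply Bieberbach to the action of $H$ on the Euclidean apartment to get a full-rank translation lattice $T\leq H$, hence translations of $\mathcal{A}$ \emph{inside $H$} in every open sector direction, and then run a direct mass-domination argument: Proposition~\ref{prop: prop 3.7 in JR's paper} plus $u_\omega$-invariance gives $\mu(K)\leq\mu(\{\omega^-\})$ for every compact $K\subseteq\mathcal{O}(\omega)$, and summing over the cover $\Omega=\bigcup_\omega\mathcal{O}(\omega)$ forces $\mu(\partial\mathcal{A})=1$. This buys you two things: you do not need the separate (B3)-based argument that the translating element lies in $H$, and you avoid the disjointification/extraction of the subsequence $n_i$; the price is the extra crystallographic input. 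Two small points you should tighten: Bieberbach needs the action of $H$ on $\mathcal{A}$ to be properly discontinuous by isometries, not merely amenable and cocompact --- this does hold here because $H$ acts by simplicial isometries of the Coxeter complex and acts freely on vertices by (B1), but amenability is not the reason; and in the final summation you implicitly use that $\omega\mapsto\omega^-$ is a bijection of the finite set $\partial\mathcal{A}$ (so that $\sum_\omega\mu(\{\omega^-\})=\mu(\partial\mathcal{A})$), which is standard (opposition in the spherical Coxeter complex at infinity) but worth stating.
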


\begin{proof}
According to Definition \ref{definition: singular subgroup}, we need to show that for any $\mu\in \Prob_{H}(\Omega)$ and every $g\in G\setminus H$, we have $g\cdot \mu\perp \mu$.
Consider such a $\mu \in \Prob_H(\Omega)$ and denote by $\{w_1,\ldots, w_k\}$ the boundary points of the apartment $\mathcal{A}$ appeared in condition (B2'). Note that $\Omega$ is a spherical building and $k$ equals the cardinality of the spherical Weyl group, which is finite.

We claim that $\supp(\mu)\subseteq \{w_1,\ldots, w_k\}$. Indeed, assume the contrary and take any 
$w\in \supp(\mu)\setminus \{w_1,\ldots,w_k\}$. Since $w\in  \supp(\mu)$, 
we may take a small closed  neighborhood of $w$, say $N_w$, such that $N_w\cap \{w_1,\ldots, w_k\}=\emptyset$ and $\mu(N_w)>0$. Since the boundary points of the apartment $\mathcal{A}$ are exactly $w_1,\ldots, w_k$, we may apply \cite[Corollary 3.3]{JR} to deduce $\Omega=\mathcal{O}(w_1)\cup\ldots\cup\mathcal{O}(w_k)$, where $\mathcal{O}(w)$ is the set of all $w'\in \Omega$ such that $w'$ is opposite to $w$. 

Without loss of generality, we may assume $\mu(N_w\cap \mathcal{O}(w_1))>0$ and that $w_k$ is the opposite boundary point of $w_1$. $N_w\cap \mathcal{O}(w_1)$ may not be a compact subset of $\mathcal{O}(w_1)$, but one may replace it with the intersection with some $K_n$ defined in the proof of \cite[Proposition 3.7]{JR}. See the paragraph after Proposition \ref{prop: prop 3.7 in JR's paper} for a quick explanation.

Hence we obtain a compact subset with $\mu(N_w\cap \mathcal{O}(w_1)\cap K_n)>0$. Note that \cite[Proposition 3.7]{JR} applies since condition (B1) guarantees that the action of $G$ on the vertex set $\Delta^0$ is proper and cocompact and $\mathcal{A}$ is periodic by condition (B2').

Without loss of generality, we assume $N_w\cap \mathcal{O}(w_1)$ is a compact subset of $\mathcal{O}(w_1)$. Then by \cite[Proposition 3.7]{RSS}, we know that $\lim_{n\to\infty}u^{-n}(N_w\cap \mathcal{O}(w_1))=\{w_k\}$, where $u\in G$ is an element which translates the apartment $\mathcal{A}$ in the direction of $w_1$. 

Note that $u$ satisfies the extra property $u\omega_1=\omega_1$, this implies that $u\in H$. Indeed, since we have sector representatives for $u\omega_1$ and $\omega_1$ in the apartment $u\mathcal{A}$ and $\mathcal{A}$ respectively, $u\mathcal
{A}^0\cap \mathcal{A}^0$ contains a subsector; in particular, $u\mathcal{A}^0\cap \mathcal{A}^0\neq \emptyset$. Then by \cite[Lemma 5.2]{JR}, condition (B3) implies $u\in H$.

Since all $w_i$ are fixed points under $H$, we deduce for any $n\geq 1$, $w_k\not\in u^{-n}(N_w)$, which implies $w_k\not\in \overline{u^{-n}(N_w\cap \mathcal{O}(w_1))}$, 
since $N_w$ is closed in $\Omega$. Therefore, we may find an increasing sequence $n_i\to\infty$ such that $u^{-n_i}(N_w\cap \mathcal{O}(w_1))\cap u^{-n_j}(N_w\cap \mathcal{O}(w_1))=\emptyset$ for all $i\neq j$. Hence, we deduce that $1=\mu(\Omega)\geq \mu(\sqcup_{i=1}^{\infty}u^{-n_i}(N_w\cap \mathcal{O}(w_1)))=\sum_{i=1}^{\infty}\mu(u^{-n_i}(N_w\cap \mathcal{O}(w_1)))=\sum_{i=1}^{\infty}\mu(N_w\cap\mathcal{O}(w_1))=\infty$, a contradiction.

We now claim that $g\cdot \supp(\mu)\cap \supp(\mu)=\emptyset$ for any $g\in G\setminus H$. To see this, assume the contrary. Then $gw_i=w_j$ for some $i, j\in \{1,\ldots, k\}$ by the above claim. Since we have sector representatives for $gw_i$ and $w_j$ in the apartment $g\mathcal{A}$ and $\mathcal{A}$ repectively, $g\mathcal{A}^0\cap \mathcal{A}^0$ contains a subsector; in particular, $g\mathcal{A}^0\cap\mathcal{A}^0\neq \emptyset$. Then by \cite[Lemma 5.2]{RSS}, condition (B3) implies $g\in H$, a contradiction.

Then, combining the above two claims, we deduce that $g\cdot \mu\perp\mu$ for all $g\not\in H$.
\end{proof}
Applying Theorem \ref{mainthm} to $\widetilde{A}_2$-buildings, we have the following corollary. 
\begin{cor}\label{cor: mainthm for A_2-groups}
Let $G$ be an $\widetilde{A}_2$ group acting on an $\widetilde{A}_2$-building $\Delta$ and $H<G$ be an abelian subgroup which acts simply transitively on the vertex set of an apartment $\mathcal{A}$ in $\Delta$. Then $H$ is singular in $G$.
\end{cor}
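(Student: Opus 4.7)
The plan is to verify that the hypotheses of Theorem \ref{mainthm} are satisfied in this setting and then invoke that theorem directly. Since $G$ is by definition an $\widetilde{A}_2$-group, it acts simply transitively on $\Delta^0$; in particular the action is free with a single vertex orbit, which immediately gives condition (B1). The building $\Delta$ being locally finite is part of the standard $\widetilde{A}_2$-building setup from \cite{CMSZ}, so the ambient hypothesis of Theorem \ref{mainthm} is also in place.

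To check (B2'), I would take the apartment $\mathcal{A}$ given in the statement. Being abelian, $H$ is amenable. Since $H$ acts simply transitively on the vertices $\mathcal{A}^0$ of $\mathcal{A}$, it preserves $\mathcal{A}$ and $H\setminus \mathcal{A}^0$ consists of a single point, so in particular it is finite and the quotient $H\setminus \mathcal{A}$ is compact. Thus $\mathcal{A}$ is a periodic apartment in the sense required by (B2'), and the translation elements inside $H$ supply, for any chosen sector direction in $\mathcal{A}$, the element $u \in G$ translating $\mathcal{A}$ in that direction which is used in the proof of Theorem \ref{mainthm}.

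Condition (B3) is then immediate: since $G$ acts simply transitively on $\Delta^0$, the quotient $G\setminus\Delta^0$ is a single point, and likewise $H\setminus\mathcal{A}^0$ is a single point, so the natural map $H\setminus\mathcal{A}^0 \to G\setminus \Delta^0$ is trivially injective. Having verified (B1), (B2'), and (B3), Theorem \ref{mainthm} yields that $H$ is singular in $G$.

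I do not expect a genuine obstacle here: the whole content of the corollary is already contained in Theorem \ref{mainthm}, and the role of this corollary is simply to record that the concrete $\widetilde{A}_2$-group setup of \cite{CMSZ, CMSZ2, RS1996} automatically fits the abstract framework of Theorem \ref{mainthm}. The only point worth pausing on is that "$H$ acts simply transitively on $\mathcal{A}^0$" really does force $H$ to stabilize $\mathcal{A}$ as a simplicial subcomplex of $\Delta$; but this is standard, since in an $\widetilde{A}_2$-building an apartment is determined by its set of vertices.
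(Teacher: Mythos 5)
Your proposal is correct and follows essentially the same route as the paper: both reduce the corollary to checking (B1), (B2'), (B3) for the simply transitive actions of $G$ on $\Delta^0$ and of $H$ on $\mathcal{A}^0$ and then invoking Theorem \ref{mainthm}. The only cosmetic difference is that the paper outsources the verification of the three conditions to \cite[Example 5.9]{RSS} (together with \cite{RS1996} for $H\cong\mathbb{Z}^2$ and the periodicity of $\mathcal{A}$), whereas you verify them directly, which is fine.
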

The above result is a strengthening of \cite[Theorem 2.8]{RS1996}.

Indeed, in the above example, $H\cong \mathbb{Z}^2$ by \cite[P. 6]{RS1996} and the apartment $\mathcal{A}$ is (doubly) periodic \cite[p. 6--7]{RS1996}. By \cite[Example 5.9]{RSS}, we know all conditions (B1), (B2'), (B3) are satisfied. 

As explained in \cite[Example 5.9]{RSS} or \cite[Remark 1.5]{RS1996}, we can apply the above corollary to $G$ being the groups (4.1), (5.1), (6.1), (9.2), (13.1) and (28.1) in the table of the end of \cite{CMSZ2}. 

Note that $\widetilde{A}_2$ groups have Kazhdan's property (T) by \cite[Theorem 4.6]{CMS} and they give rise to II$_1$ factors by \cite[Lemma 0.2]{RS1996} or \cite[Lemma 5.6]{RSS}. So we have more examples of higher rank abelian, maximal amenable subalgebras in II$_1$ factors with property (T). See \cite{remi_carderi1, remi_carderi2} for more examples.

\section{Acylindrically hyperbolic groups}\label{section: on acylindrical hyperbolic groups}

In \cite[p. 1201]{remi_carderi1}, it was mentioned that if $H<G$ is an infinite amenable subgroup which is hyperbolically embedded then $LH$ is maximal amenable inside $LG$. Since the proof was based on Popa's asymptotic orthogonality approach and was omitted, we take this opportunity to include a proof of a slightly weaker version (see Remark \ref{rk: hyperbolic embeded vs containing loxodromic}) of this result using the geometric approach in \cite{remi_carderi2}. The proof is similar to the proof of \cite[Lemma 3.2]{remi_carderi2}, but uses more recent work on acylindrically hyperbolic groups. 

Let us first briefly recall the standard terminology related to acylindrically hyperbolic groups, we refer the readers to \cite{DGO, Osin} for details.

An action of a group $G$ on a metric space $S$ is called \emph{acylindrical} if for every $\epsilon>0$ there exist $R, N>0$ such that for every two points $x, y$ with $d(x, y)>R$, there are at most $N$ elements $g\in G$ satisfying $d(x, gx)\leq \epsilon$ and $d(y, gy)\leq \epsilon$. From now on, we assume the space $S$ is hyperbolic and $G$ acts on $S$ isometrically, this action extends to an action on its Gromov boundary $X:=\partial S$ by homeomorphisms. We say an element $g\in G$ is \emph{loxodromic} if the map $\mathbb{Z}\to S$ defined by $n\mapsto g^ns$ is a quasi-isometry for some (equivalently, any) $s\in S$. Every loxodromic element $g\in G$ has exactly two limit points $g^{\pm \infty}$ on $\partial S$. Loxodromic elements $g, h\in G$ are called \emph{independent} if the sets $\{g^{\pm \infty}\}$ and $\{h^{\pm\infty}\}$ are disjoint.

We say the action $G\curvearrowright S$ is \emph{elementary} if the limit set of $G$ on $\partial S$ contains at most two points. Here, the limit set of $G$ is just the set of accumulation points of a $G$-oribts on $\partial S$. In fact, this definition does not depend on the choice of $G$-orbits.

$G$ is called an \emph{acylindrically hyperbolic group} if it admits a non-elementary acylindrical action on a hyperbolic space $S$. Typical examples of acylindrically hyperbolic groups include non-elementary hyperbolic groups, certain non-virtually-cyclic relatively hyperbolic groups, mapping class groups and $Out(F_n)$ for $n\geq 2$ etc.

A useful tool used later is the following theorem of Osin on classification of groups acting acylindrically on hyperbolic spaces. Note that for an acylindrically hyperbolic group $G$ (w.r.t. $G\curvearrowright S$), condition (3) below holds.

\begin{thm}[Theorem 1.1 in \cite{Osin}]
Let $G$ be a group acting acylindrically on a hyperbolic space $S$ (isometrically). Then $G$ satisfies exactly one of the following three conditions.
\begin{enumerate}
\item $G$ has bounded orbits.
\item $G$ is virtually cyclic and contains a loxodromic element.
\item $G$ contains infinitely many independent loxodromic elements.
\end{enumerate}
\end{thm}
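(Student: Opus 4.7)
The plan is to deduce the trichotomy from two basic facts about acylindrical actions: (a) every individual element acts either elliptically or loxodromically (no parabolic behavior), and (b) if two loxodromic elements are not ``parallel'' in a suitable sense, one can produce infinitely many independent loxodromics by a ping-pong argument. The three alternatives are clearly mutually exclusive (bounded orbits preclude a loxodromic; virtually cyclic groups cannot have two independent loxodromics since a common finite-index cyclic subgroup would force the fixed-point pairs to coincide), so the content is to show one of them must hold.

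First I would establish the \emph{no-parabolic} dichotomy: for an acylindrical action on a hyperbolic space, every $g\in G$ is elliptic or loxodromic. The idea is that if $g$ is parabolic, then $\{g^n x\}$ is unbounded but the translation length is zero, so along a subsequence $d(x,g^{n_k}x)\to\infty$ while $d(g^{n_k}x,g^{n_{k+1}}x)$ can be made small. Applying acylindricity at the points $x$ and $g^{n_k}x$ with a small constant $\epsilon$ would yield infinitely many elements $g^{n_k-n_j}$ moving both points by at most $\epsilon$, contradicting the bound $N$. Hence parabolic elements are ruled out.

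Next I would treat the case where $G$ contains no loxodromic element. Then every element is elliptic, and one shows that the whole group has bounded orbits. One route is to fix a basepoint $x$, assume for contradiction the orbit is unbounded, pick a sequence $g_n$ with $d(x,g_n x)\to \infty$, and extract a limit point on $\partial S$; combined with ellipticity of each $g_n$ and acylindricity, one derives a contradiction (this is essentially the argument that an action by elliptic isometries on a hyperbolic space with an unbounded orbit must contain a parabolic or loxodromic element, adapted via acylindricity). This places us in case (1).

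Finally, suppose $G$ contains a loxodromic element $g$, with fixed-point pair $F=\{g^{+\infty},g^{-\infty}\}\subset\partial S$. Let $E(g)$ be the set-wise stabilizer of $F$ in $G$. A standard argument using acylindricity shows $E(g)$ is virtually cyclic and is the maximal virtually cyclic subgroup containing $g$. If $G=E(g)$ we are in case (2). Otherwise pick $h\in G$ with $hF\neq F$; then $hgh^{-1}$ is loxodromic with fixed-point pair $hF$, and if $hF\cap F=\emptyset$ we already have two independent loxodromics, while if they share exactly one point one performs a small perturbation ($g' = h^k g h^{-k} g^{m}$ type products) to separate the endpoints. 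The main technical step, and I expect the main obstacle, is then to bootstrap from two independent loxodromics $a,b$ to \emph{infinitely many} pairwise independent ones: here one invokes the hyperbolic ping-pong lemma to show that sufficiently high powers $a^N b^N a^{N+1} b^{N+1} \cdots$ produce, for varying exponents, an infinite family of loxodromic elements whose fixed-point pairs are all distinct. Acylindricity is used to ensure that the ping-pong regions can be chosen with the required disjointness, and that the resulting elements are genuinely loxodromic rather than elliptic. This lands us in case (3), completing the trichotomy.
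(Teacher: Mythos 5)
This statement is not proved in the paper at all: it is quoted verbatim as Theorem 1.1 of Osin \cite{Osin} and used as a black box, so there is no in-paper argument to compare your sketch with; I can only measure it against Osin's actual proof, whose overall strategy (rule out parabolic elements, rule out unbounded orbits without loxodromics, then split the loxodromic case via the stabilizer $E(g)$ of the fixed pair) your outline does broadly follow.

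Measured that way, two of your steps have genuine gaps. First, the no-parabolic step: for a parabolic isometry $g$ it is the \emph{stable} translation length $\lim_n d(x,g^nx)/n$ that vanishes, not the successive displacements; $d(g^{n_k}x,g^{n_{k+1}}x)=d(x,g^{n_{k+1}-n_k}x)$ is bounded below by $\inf_m d(x,g^mx)>0$ and in fact grows as the gaps $n_{k+1}-n_k$ grow, so you do not obtain infinitely many elements moving two $R$-separated points by at most $\epsilon$, and the contradiction with acylindricity evaporates. The correct argument (due to Bowditch) is considerably more delicate. Second, and more seriously, the step ``no loxodromic element implies bounded orbits'' is the technical heart of Osin's theorem, and the general principle you appeal to --- that an action by elliptic isometries with an unbounded orbit must contain a parabolic or loxodromic --- is simply false without acylindricity: an ascending union of finite groups acts on its Bass--Serre-type tree with every element elliptic and unbounded orbits. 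So acylindricity must enter in an essential, quantitative way precisely here, and your sketch supplies no mechanism for it; Osin's proof of this implication is an involved argument, not an adaptation of a classical fact. (A smaller point: in your last step, if two loxodromic fixed-point pairs shared exactly one boundary point, acylindricity already forbids this --- one gets infinitely many elements displacing two far-apart points boundedly --- so no ``perturbation'' of the form $h^kgh^{-k}g^m$ is needed; and the fact that $E(g)$ is virtually cyclic itself uses acylindricity via the uniform lower bound on translation lengths, so it should be flagged as such rather than called standard.) As it stands, the proposal records the right trichotomy and the right skeleton, but the two key places where acylindricity does the work are either incorrect or missing.
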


We are now in the position to state the following result.
\begin{thm}\label{thm on acylindrically hyperbolic groups case}
Let $G$ be an acylindrically hyperbolic group (say w.r.t the action $G\curvearrowright S$) and $H$ be any maximal amenable subgroup containing a loxodromic element $h$ (w.r.t. $G\curvearrowright S$). Then $LH<LG$ is maximal injective.
\end{thm}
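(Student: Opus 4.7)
The plan is to apply the Boutonnet--Carderi criterion recalled in Section~\ref{section: preliminaries}: to prove that $LH$ is maximal injective (equivalently, maximal amenable) in $LG$, it suffices to produce a compact $G$-space $X$ with respect to which $H$ is singular. Following the geometric strategy of Theorem~\ref{mainthm}, I would take $X=\partial S$, with the pair $\{h^{+\infty},h^{-\infty}\}$ of loxodromic fixed points of $h$ playing the role that the finite boundary $\{w_1,\ldots,w_k\}$ of the periodic apartment played in the $\widetilde{A}_2$ setting, and with the North--South dynamics of $h$ on $\partial S$ taking the place of the attracting property of Proposition~\ref{prop: prop 3.7 in JR's paper}.

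First I would pin down the structure of $H$. The restriction of the action $G\curvearrowright S$ to $H$ remains acylindrical, so Osin's trichotomy \cite{Osin} applied to $H$ rules out bounded orbits (since $h\in H$ is loxodromic) and infinitely many independent loxodromic elements (since $H$ is amenable and $F_2$ is not). Hence $H$ is virtually cyclic, contained in the (unique) maximal elementary subgroup $E(h)=\{g\in G:g\{h^{+\infty},h^{-\infty}\}=\{h^{+\infty},h^{-\infty}\}\}$. Since $E(h)$ is itself virtually cyclic and therefore amenable, maximality of $H$ forces $H=E(h)$.

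Next, given $\mu\in\Prob_H(\partial S)$, I would show $\supp(\mu)\subseteq\{h^{+\infty},h^{-\infty}\}$ by mimicking the support claim in the proof of Theorem~\ref{mainthm}. If some $w\in\supp(\mu)$ were to lie outside $\{h^{\pm\infty}\}$, choose a closed neighborhood $N_w$ of $w$ disjoint from $\{h^{\pm\infty}\}$ with $\mu(N_w)>0$. Then $A_n:=h^{-n}N_w$ is compact, avoids $h^{-\infty}$ (which is fixed by $h$), and Hausdorff-converges to $\{h^{-\infty}\}$ by North--South dynamics. Inductively extract $n_i\to\infty$ with the $A_{n_i}$ pairwise disjoint: having fixed $A_{n_1},\ldots,A_{n_i}$, their union is compact and misses $h^{-\infty}$, so any sufficiently small open neighborhood $U$ of $h^{-\infty}$ avoids it, and for $n$ large enough $A_n\subseteq U$. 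Using $H$-invariance, $\mu\bigl(\bigsqcup_i A_{n_i}\bigr)=\sum_i\mu(N_w)=\infty$, contradicting $\mu(\partial S)=1$.

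The final step is to verify $g\cdot\mu\perp\mu$ for every $g\in G\setminus H$. Since $\mu$ is supported on $\{h^{+\infty},h^{-\infty}\}$, it suffices to check that $g\{h^{+\infty},h^{-\infty}\}\cap\{h^{+\infty},h^{-\infty}\}=\emptyset$ for such $g$. Were this to fail, the loxodromic element $ghg^{-1}$ would share a boundary fixed point with $h$; invoking the standard rigidity for acylindrical actions (two loxodromic elements sharing a single fixed point on $\partial S$ have identical fixed-point sets and thus lie in a common maximal elementary subgroup), we would conclude $g\{h^{+\infty},h^{-\infty}\}=\{h^{+\infty},h^{-\infty}\}$, i.e.\ $g\in E(h)=H$, a contradiction. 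I expect the main obstacle to be the careful identification $H=E(h)$ via Osin's trichotomy together with correctly importing this rigidity lemma for loxodromic fixed points; once both are in hand, the dynamical core of the argument transfers essentially verbatim from the $\widetilde{A}_2$ proof, and singularity of $H$ in $G$ yields maximal amenability of $LH$ inside $LG$ via the Boutonnet--Carderi theorem.
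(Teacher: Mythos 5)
Your proposal is correct and follows essentially the same route as the paper: it reduces the theorem to singularity of $H$ with respect to $X=\partial S$ via the Boutonnet--Carderi criterion, identifies $H$ with the stabilizer of the pair $\{h^{+\infty},h^{-\infty}\}$ using Osin's trichotomy (the ``rigidity'' fact you flag is exactly the paper's Lemma~\ref{same stabilizer groups for loxodromic elements}), and uses the North--South dynamics of $h$ to confine supports of $H$-invariant measures to that pair, just as in Proposition~\ref{thm on MCG}. The only cosmetic difference is that you deduce $g\cdot\{h^{\pm\infty}\}\cap\{h^{\pm\infty}\}=\emptyset$ directly from $H=\operatorname{Stab}_G(\{h^{\pm\infty}\})$ and the shared-fixed-point rigidity, where the paper runs a short normalizer argument; both are instances of the same toolkit.
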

This is a direct corollary of the following proposition which proves that $H$ is singular in $G$.
\begin{proposition}\label{thm on MCG}
Let $H<G$ be an infinite maximal amenable subgroup containing a loxodromic element $h$. Let $X=\partial S$. Then the following statements hold.
\begin{enumerate}
\item There exist two points $a, b\in X$ such that $H$ is the stabilizer of the set $\{a, b\}$, that is $H=\operatorname{Stab}_G(\{a, b\}):=\{g\in G: g\cdot\{a, b\}=\{a, b\}\}$.
\item Any $H$-invariant probability measure on $X$ is of the form $t\delta_a+(1-t)\delta_b$ for some $t\in [0, 1]$.
\item Any element $g\in G\setminus H$ is such that $g\cdot\{a, b\}\cap\{a, b\}=\emptyset$.
\end{enumerate}
\end{proposition}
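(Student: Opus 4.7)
The plan is to set $a := h^{+\infty}$ and $b := h^{-\infty}$, the two boundary fixed points of the loxodromic element $h \in H$, and prove (1)--(3) in this order; parts (2) and (3) both rely on the identification $H = \operatorname{Stab}_G(\{a,b\})$ established in (1).

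For (1), first apply Osin's trichotomy to the restriction of the acylindrical action $G \curvearrowright S$ to $H$; acylindricity is inherited by subgroups, and $H$ has unbounded orbits since $h$ is loxodromic. If $H$ contained two independent loxodromic elements, it would itself be acylindrically hyperbolic and would therefore contain a non-abelian free subgroup \cite{DGO, Osin}, contradicting amenability. Hence $H$ is virtually cyclic. It is a standard consequence of acylindricity that, for any loxodromic $h \in G$, the elementary closure $E_G(h) := \operatorname{Stab}_G(\{a,b\})$ is virtually cyclic and is the unique maximal virtually cyclic subgroup of $G$ containing $h$ (see \cite{DGO, Osin}); in particular $H \subseteq E_G(h)$. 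Since $E_G(h)$ is amenable and $H$ is maximal amenable, we conclude $H = E_G(h) = \operatorname{Stab}_G(\{a,b\})$.

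For (2), let $\mu \in \operatorname{Prob}_H(X)$. The only input needed is the classical North--South dynamics of the loxodromic element $h$: for any disjoint open neighborhoods $U$ of $a$ and $V$ of $b$, and any compact $K \subseteq X \setminus (U \cup V)$, one has $h^n(K) \subseteq U$ and $h^{-n}(K) \subseteq V$ for all $n$ large enough. Given a compact $K \subseteq X \setminus \{a, b\}$, choose disjoint $U \ni a$, $V \ni b$ avoiding $K$ and pick $N$ with $h^{\pm N}(K) \cap K = \emptyset$. The translates $\{h^{iN}(K)\}_{i \geq 1}$ are then pairwise disjoint, so by $h$-invariance of $\mu$,
\[
\sum_{i=1}^{\infty} \mu(K) \;=\; \sum_{i=1}^{\infty} \mu(h^{iN}(K)) \;\leq\; \mu(X) \;=\; 1,
\]
forcing $\mu(K) = 0$. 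Inner regularity of $\mu$ then yields $\mu(X \setminus \{a,b\}) = 0$, so $\mu = t\delta_a + (1-t)\delta_b$ for some $t \in [0,1]$.

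For (3), suppose for contradiction that $g \in G \setminus H$ satisfies $g \cdot \{a,b\} \cap \{a,b\} \neq \emptyset$. The conjugate $ghg^{-1}$ is loxodromic with fixed-point set $g \cdot \{a,b\}$, which shares a point with $\{a,b\}$; thus $h$ and $ghg^{-1}$ cannot be independent. By the standard dichotomy for pairs of loxodromic elements under an acylindrical action---they are either independent or have equal fixed-point sets on $\partial S$ (cf.\ \cite{Osin})---we must have $g \cdot \{a,b\} = \{a,b\}$, hence $g \in \operatorname{Stab}_G(\{a,b\}) = H$ by (1), a contradiction. The main obstacle of the whole proof is locating and applying the rigidity of loxodromic fixed-point sets under acylindrical actions, which feeds both (1) (via maximality of $E_G(h)$) and (3); once these are in place, (2) is a soft measure-theoretic argument.
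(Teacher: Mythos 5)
Your argument is correct in substance but takes a genuinely different route from the paper's in parts (1) and (3). Where you black-box the theory of elementary closures --- that $E_G(h)$ is the unique maximal virtually cyclic subgroup containing $h$, that $E_G(h)=\operatorname{Stab}_G(\{a,b\})$, and the dichotomy that two loxodromics of an acylindrical action sharing one boundary fixed point share both --- the paper proves exactly the special cases it needs by hand: Lemma \ref{same stabilizer groups for loxodromic elements} ($\operatorname{Stab}_G(a)=\operatorname{Stab}_G(b)$) is extracted from Osin's trichotomy applied to $\langle h, ghg^{-1}\rangle$ together with commensuration of powers in a virtually cyclic group; the inclusion $H\subseteq\operatorname{Stab}_G(\{a,b\})$ comes from ping-pong (an $s\in H$ with $s\{a,b\}\cap\{a,b\}=\emptyset$ would force a free subgroup of $H$); and (3) is finished by a normalizer-plus-maximal-amenability argument rather than your fixed-point dichotomy. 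Your route is shorter and avoids the case analysis in (3), but the two identifications you invoke ($E_G(h)=\operatorname{Stab}_G(\{a,b\})$ and ``independent or equal fixed pairs'') are folklore consequences of \cite{DGO, Osin} rather than verbatim statements there, so you should either locate precise references or include the one-line argument the paper gives: all loxodromics fixing a common boundary point generate a subgroup with no two independent loxodromics, hence virtually cyclic by \cite[Theorem 1.1]{Osin}, hence with equal fixed pairs. The paper's approach buys self-containedness from Osin's trichotomy alone; yours buys brevity.

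The one step I would not let pass as written is the end of (2): you deduce $\mu(X\setminus\{a,b\})=0$ from the vanishing of $\mu$ on compact subsets via ``inner regularity.'' Here $S$ is an arbitrary hyperbolic space, so $X=\partial S$ need not be compact, locally compact or second countable, and a Borel probability measure on such a space need not be tight, so compact subsets of $X\setminus\{a,b\}$ need not exhaust the measure. This is precisely why the paper argues instead with a point $p$ of the support and a \emph{closed} neighborhood $\mathcal{O}_p$ separated from $\{a,b\}$, using the complete Hausdorff property of $\partial S$ from \cite{Sun}, and then runs your disjoint-translates argument on $\mathcal{O}_p$; note that Hamann's form of north--south dynamics \cite{Hamann} pushes any set disjoint from a neighborhood of $b$ into a prescribed neighborhood of $a$, so compactness is never needed. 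Replace the inner-regularity step by the support argument (or restrict to the case where $X$ is compact metrizable) and your proof is complete.
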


For the proof, we record the following lemma.

\begin{lem}\label{same stabilizer groups for loxodromic elements}
Let $a, b$ be the two fixed points of the loxodromic element $h$ in $X=\partial S$. Then $\operatorname{Stab}_G(a)=\operatorname{Stab}_G(b)$.
\end{lem}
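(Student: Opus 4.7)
The plan is to show each inclusion by conjugating $h$ with a stabilizer of one endpoint and invoking the standard dichotomy for loxodromic elements under acylindrical actions.

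First I would take an arbitrary $g \in \operatorname{Stab}_G(a)$ and consider the conjugate element $h' := g h g^{-1}$. Since $h$ is loxodromic with fixed points $\{a,b\}$ and $g$ acts by homeomorphisms on $X = \partial S$, the element $h'$ is again loxodromic with attracting/repelling pair $\{g a, g b\} = \{a, g b\}$. Thus $h$ and $h'$ share at least the boundary point $a$.

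Next I would invoke the key geometric fact that underlies the whole theory of acylindrically hyperbolic groups: if $G$ acts acylindrically on a hyperbolic space $S$, then any two loxodromic elements are either independent (their boundary fixed-point sets are disjoint) or have exactly the same pair of fixed points on $\partial S$. This is a consequence of acylindricity together with the north–south dynamics of loxodromic isometries, and can be cited from \cite{Osin} (see also \cite{DGO}). Applied to $h$ and $h'$, which already share the point $a$, this forces $\{a,b\} = \{a, gb\}$. Since $g$ is a bijection of $X$ and $g a = a$, we cannot have $g b = a$ (else $a=b$, contradicting $h$'s loxodromicity). Hence $g b = b$, proving $\operatorname{Stab}_G(a) \subseteq \operatorname{Stab}_G(b)$. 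The reverse inclusion is symmetric: replace $a$ by $b$ and repeat the argument with any $g \in \operatorname{Stab}_G(b)$ and the same $h$.

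I expect the main (minor) obstacle to be locating the precise reference for the dichotomy that two loxodromic isometries under an acylindrical action either are independent or share both fixed points; this is implicit in the structure of the elementary closure $E_G(h)$ of a loxodromic element, and an explicit statement appears in \cite{Osin} and \cite{DGO}. Once that is invoked, the argument is essentially a one-line symmetry observation.
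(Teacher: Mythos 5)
Your argument is correct in outline and begins exactly as the paper's does: take $g\in\operatorname{Stab}_G(a)$, pass to the conjugate $ghg^{-1}$, and note that it is loxodromic with fixed pair $\{a,gb\}$, so that $h$ and $ghg^{-1}$ share the point $a$. The difference lies in how the shared-point situation is resolved. You invoke as a black box the dichotomy that, for an acylindrical action, two loxodromic elements are either independent or have the same fixed pair, and you admit you cannot pin down the reference; to my knowledge this exact statement is not a quotable numbered result in \cite{Osin} or \cite{DGO} (it is implicit in the theory of the elementary closure $E_G(h)$), so as written the entire mathematical content of the lemma is outsourced to an unverified citation. The paper instead derives precisely this dichotomy, in the case at hand, from \cite[Theorem 1.1]{Osin}, which it has already recorded: every element of $\langle h, ghg^{-1}\rangle$ fixes $a$, so this subgroup cannot contain two independent loxodromic elements (their fixed sets would be disjoint yet both contain $a$); since it does contain the loxodromic $h$, it must be virtually cyclic, whence $gh^{n}g^{-1}=h^{n'}$ for some nonzero $n,n'$, forcing $\mathrm{Fix}(ghg^{-1})=\mathrm{Fix}(h)=\{a,b\}$ and hence $gb=b$. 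If you replace your citation by this short derivation (your dichotomy is true, and this is its proof), your argument coincides with the paper's; as it stands, that step is the one genuine soft spot.
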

\begin{proof}
Assume not, then either $\operatorname{Stab}_G(a)\setminus \operatorname{Stab}_G(b)\neq \emptyset$ or $\operatorname{Stab}_G(b)\setminus \operatorname{Stab}_G(a)\neq \emptyset$. If $g\in \operatorname{Stab}_G(a)\setminus \operatorname{Stab}_G(b)$. Then $ghg^{-1}$ is also loxodromic by definition. And note that $Fix(h)=\{a, b\}$, but $b\not\in Fix(ghg^{-1})\ni a$. Hence, for each $t\in \langle ghg^{-1}, h\rangle$, $a\in Fix(t)$. Then by \cite[Theorem 1.1]{Osin}, $\langle ghg^{-1}, h\rangle$ is virtually cyclic and contains a loxodromic element $t$. Then $e\neq gh^ng^{-1}=h^{n'}\in \langle t\rangle$ for some nonzero integers $n, n'$. Then $Fix(ghg^{-1})=Fix(gh^ng^{-1})=Fix(h^{n'})=Fix(h)$, a contradiction. The other case is proved similarly.
\end{proof}

\begin{proof}[Proof of Proposition \ref{thm on MCG}]
By \cite[Proposition 3.4]{Hamann}, $h$ acts on $X$ with a north-south dynamics. Denote by $a, b$ the two fixed points of $h$ in $X=\partial S$, and let us assume $a$ is the attracting point.

(i) Let $s\in H$. Then $shs^{-1}$ is a loxodromic element with fixed points $s\cdot a$ and $s\cdot b$. If $\{a, b\}\cap \{s\cdot a, s\cdot b\}=\emptyset$, then by the ping-pong lemma, $H\supseteq \langle h, shs^{-1} \rangle$ contains a free group, which is impossible since $H$ is amenable. Then by Lemma \ref{same stabilizer groups for loxodromic elements}, $\{s\cdot a, s\cdot b\}=\{a, b\}$ since $shs^{-1}$ and $h$ fix a common point and hence the other point. Hence $H\subseteq \operatorname{Stab}_{G}(\{a, b\})$. To show that equality holds we note that $\operatorname{Stab}_{G}(\{a, b\})$ is amenable since $[\operatorname{Stab}_G(\{a, b\}): \operatorname{Stab}_G(a)\cap \operatorname{Stab}_G(b)]\leq 2$ and $\operatorname{Stab}_G(a)$ is virtually cyclic by \cite[Theorem 1.1]{Osin}.

(ii) We only need to show the support of any $H$-invariant probability measure is contained in $\{a, b\}$. This is a consequence of the north-south dynamics action of $h$. We sketch the proof for completeness. Assume there exists $p\in \supp(\mu)\setminus\{a, b\}$, then since $X$ is complete Hausdorff (i.e. for any two distinct points $u, v\in X$, there are open sets $U, V$ containing $u, v$ respectively, such that $\bar{U}\cap \bar{V}=\emptyset$, see \cite{Sun}), we may find a closed neighborhood $\mathcal{O}_p$ of $p$ such that $\mathcal{O}_p\cap \{a, b\}=\emptyset$ and $\mu(\mathcal{O}_p)>0$. Then there exists an increasing sequence $n_i$ such that $h^{n_i}\mathcal{O}_p\to a$ and the family of sets $\{h^{n_i}\mathcal{O}_p\}_i$ is pairwise disjoint, hence we get a contradiction since $\mu(h^{n_i}\mathcal{O}_p)=\mu(\mathcal{O}_p)$.

(iii) By Lemma \ref{same stabilizer groups for loxodromic elements} and (1), we know that $\operatorname{Stab}_G(a)=\operatorname{Stab}_G(b)\subseteq H$. Then the proof goes similarly as in \cite{remi_carderi2}. We include it for completeness. Take $g\in G$ such that $g\cdot a=b$. If there exists some $s\in H$ which exchanges $a$ and $b$. Then $sg$ fixes $a$ and so $g\in H$. Otherwise, all elements in $H$ fix $a$ and $b$, then $gsg^{-1}$ fixes $b$ and $g^{-1}sg$ fixes $a$, for all $s\in H$. Hence $g$ normalizes $H$ so $g\in H$ by maximal amenability.
\end{proof}

\begin{remark}
$H$ always exists since every group element $g\in G$ is contained in a maximal amenable subgroup by Zorn's lemma. And such an $H$ is virtually cyclic by \cite[Theorem 1.1]{Osin}.
\end{remark}

\begin{remark}\label{rk: hyperbolic embeded vs containing loxodromic}
\normalfont
Note that in Theorem \ref{thm on acylindrically hyperbolic groups case}, the subgroup $H$ is hyperbolically embedded by \cite[Theorem 6.8]{DGO}. It is not clear to us whether one can find a loxodromic element inside any infinite hyperbolically embedded amenable subgroup $H$. It is known that if $H$ does not contain any loxodromic elements, then $H$ is elliptic by \cite[Theorem 1.1]{Osin}. On the one hand, there do exist elliptic subgroups that are not hyperbolically embeded, see \cite[Corollary 7.8]{MO};
on the other hand, if a hyperbolic embeded subgroup $H$ is virtually cyclic, then it contains a loxodromic element by the proof of $(L_4)\Rightarrow (L_1)$ in the proof of \cite[Theorem 1.4]{Osin}. 
\end{remark}

\section{The case of no loxodromic elements in $H$}\label{section: remove loxodromic elements assumption}

Motivated by Theorem \ref{thm on acylindrically hyperbolic groups case}, it is natural to ask whether we can drop the assumption that $H$ contains loxodromic elements or more generally $H$ is hyperbolically embedded. Moreover, by \cite{osin_betti number}, we know that many non-amenable groups with positive first $\ell^2$-Betti number are acylindrically hyperbolic, then it is natural to ask whether $LH$ is maximal amenable in $LG$ if $H<G$ is infinite maximal amenable and $\beta^{(2)}_1(G)>0$.  Modifying the example in \cite[p. 1697]{remi_carderi2}, we show that both questions have negative answers.

\begin{proposition}\label{counterexample}
Let $K=BS(m, n)=\langle a,t|ta^mt^{-1}=a^n \rangle$, $H=\langle a \rangle<K$ and $G=K*F_2$, where $F_2$ denotes the non-abelian free group on two generators. Then the following statements hold.
\begin{enumerate}
\item $\beta^{(2)}_1(G)>0$, and if $|m|, |n|\geq 3$, then $H$ is maximal amenable in $G$ but $LH$ is not maximal amenable in $LG$. 
\item $K$ is not acylindrically hyperbolic if $|m|, |n|\geq 3$, while $G$ is acylindrically hyperbolic and $H<G$ is not hyperbolically embeded. 
\item If $c: G\to \mathcal{H}$ is any cocycle with $c(a)=0$, then $c(t)=0$, i.e. $ker(c)\neq H$, where $G\curvearrowright \mathcal{H}$ is a mixing unitary representation.
\end{enumerate}
\end{proposition}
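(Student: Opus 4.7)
The plan is to treat the three parts by a mix of Bass--Serre theory, an elementary HNN calculation, and a cocycle identity.

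For (1), positivity of $\beta_1^{(2)}(G)$ follows from the standard formula for $\ell^2$-Betti numbers of free products of infinite groups, giving $\beta_1^{(2)}(K*F_2) = \beta_1^{(2)}(K) + \beta_1^{(2)}(F_2) + 1 \ge 2$. For maximal amenability of $H$ in $G$, I would combine the standard trichotomy that an amenable group acting on a tree fixes either a vertex, a single end, or an unordered pair of ends, with analysis of the Bass--Serre trees of both $K*F_2$ and $K=BS(m,n)$. In the tree $T_G$ of the free product, $a$ has unique fixed vertex $v_K$ and acts freely on the infinitely many edges at $v_K$, hence fixes no end and no invariant line; any amenable $A\supseteq\langle a\rangle$ must therefore fix $v_K$, so $A\subseteq K$. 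In the tree $T_K$ of the HNN-extension, the base vertex has $|m|$, resp.\ $|n|$, edges on each of two sides, and $a$ acts on each side as a cyclic permutation of that length; the assumption $|m|,|n|\ge 3$ is exactly what prevents such a permutation from being a transposition, ruling out the swapped-ends case and leaving only $A = \langle a\rangle$. For the negative statement on $LH$, I plan to exhibit an element of $(L\langle a\rangle)'\cap LK$ not in $L\langle a\rangle$: its existence shows $L\langle a\rangle$ is not even a masa in $LG$, and adjoining its self-adjoint part produces a strictly larger abelian, hence amenable, subalgebra. From $ta^m = a^n t$ one checks directly that $a^n$ commutes with $tat^{-1}$, while Britton's normal form for the HNN-extension rules out $a$ and $tat^{-1}$ commuting, so the $\langle a\rangle$-conjugation orbit of $tat^{-1}$ has exactly $n$ elements; the orbit sum
\[
x := \sum_{k=0}^{n-1} u_{a^k(tat^{-1})a^{-k}}
\]
then satisfies $u_a x u_a^* = x$ but $x \notin L\langle a\rangle$.

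For (2), that $K = BS(m,n)$ is not acylindrically hyperbolic is a consequence of Minasyan--Osin's classification of acylindrically hyperbolic Baumslag--Solitar groups: the commensuration $(tat^{-1})^m = a^n$ forces the fixed-point sets of two loxodromic conjugates in any hypothetical acylindrical action to coincide, precluding independent loxodromics. On the other hand $G = K*F_2$ is acylindrically hyperbolic: its action on the Bass--Serre tree is acylindrical (trivial edge stabilizers) and non-elementary, with loxodromic elements supplied by $F_2$. For the non-hyperbolic-embeddedness of $H$ in $G$, I would use that hyperbolically embedded subgroups are almost malnormal; however, $(tat^{-1})^m = a^n$ gives $t\langle a\rangle t^{-1}\cap\langle a\rangle\supseteq\langle a^n\rangle$, which is infinite, so $\langle a\rangle$ is not almost malnormal in $G$.

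For (3), this is a short cocycle computation. Using $c(gh) = c(g) + \pi(g) c(h)$ together with $c(a) = 0$, induction gives $c(a^k) = 0$ for every $k\in\mathbb{Z}$, and in particular $c(t^{-1}) = -\pi(t^{-1}) c(t)$. Applying $c$ to both sides of $ta^m t^{-1} = a^n$ and using $ta^m = a^n t$ yields
\[
0 = c(a^n) = c(t) + \pi(ta^m) c(t^{-1}) = c(t) - \pi(a^n) c(t),
\]
so $c(t)$ is $\pi(a^n)$-fixed. Since $a$ has infinite order in $G$ and $\pi$ is mixing, the only $\pi(a^n)$-fixed vector is zero; hence $c(t) = 0$. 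The most delicate step across the three parts is the construction of $x$ in (1), but once the commutation $a^n(tat^{-1}) = (tat^{-1})a^n$ is verified from $ta^m = a^n t$, the rest reduces to standard Bass--Serre bookkeeping plus this single cocycle identity.
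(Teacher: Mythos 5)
Your proposal is correct in substance, but for the group-theoretic half of (1) it takes a genuinely different route from the paper. The paper reduces maximal amenability of $H$ in $G$ to maximal amenability in $K$ via a normal-form argument: a separate proposition shows that (for torsion-free factors) every $g\in K*L\setminus K$ is free from $K$, so $\langle H,g\rangle\supseteq H*\langle g\rangle\supseteq F_2$; and maximal amenability of $\langle a\rangle$ in $BS(m,n)$ is either quoted from Boutonnet--Carderi or reproved by exhibiting, via Britton's lemma, the free subgroup $\langle gag^{-1}a,\,agag^{-1}\rangle$ for every $g\notin H$. You instead run the amenable-on-a-tree trichotomy (fixed vertex, fixed end, or invariant line) on the Bass--Serre trees of both $G=K*F_2$ and $K$: since $a$ has a unique fixed vertex and acts on the edges at it by an $|m|$-cycle and an $|n|$-cycle (freely in $T_G$), the end and line cases are excluded, and $|m|,|n|\geq 3$ enters exactly as the absence of $2$-cycles. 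This is a clean geometric alternative that makes the role of the hypothesis transparent and avoids the combinatorial normal-form case analysis, at the cost of invoking the classification of amenable actions on trees; the paper's route is more elementary and self-contained. Your treatment of the von Neumann part of (1) (the $\langle a\rangle$-orbit sum $x=\sum_{k=0}^{n-1}u_{a^ktat^{-1}a^{-k}}$, using that $a^n=ta^mt^{-1}$ commutes with $tat^{-1}$), of the almost-malnormality failure in (2), and of the cocycle identity in (3) (you fix $c(t)$ with $\pi(a^n)$, the paper with $\pi(ta^mt^{-1})$, the same element) coincides with the paper. Two small points to tighten: elements of the free factor $F_2$ are elliptic on the Bass--Serre tree of $K*F_2$, so loxodromics are mixed words such as $kf$ with $k\in K\setminus\{e\}$, $f\in F_2\setminus\{e\}$, not elements of $F_2$ itself; and non-acylindrical-hyperbolicity of $BS(m,n)$ is most directly cited as in the paper (Osin's Example 7.4, via $\langle a\rangle$ being an infinite commensurated, hence $s$-normal, subgroup) rather than through your heuristic about coinciding fixed-point sets, which as written is not yet a proof.
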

\begin{proof}
(1) By \cite[Proposition 3.1]{ja},  $\beta^{(2)}_1(G)\geq \beta^{(2)}_1(K)+\beta^{(2)}_1(F_2)\geq 1$. As explained in \cite[p. 1697]{remi_carderi2}, $H$ is maximal amenable in $K$ if $|m|, |n|\geq 3$ (see Proposition \ref{prop: max amenable in BS(n, n)} below for a different proof) but $LH$ is not maximal amenable $LK$ since $x:=\sum_{k=0}^{n-1}a^ktat^{-1}a^{-k}\in \mathbb{C}K$ commutes with $LH$, hence $LH$ is not maximal amenable in $LG$ either. Then we can apply Proposition \ref{prop on maximal amenable in free product} below to see $H$ is still maximal amenable in $G$ since $K$ is torsion free if $mn\neq 0$ by \cite{kms}. 
 
(2) If $|m|, |n|\geq 3$, then $K$ is not acylindrically hyperbolic by \cite[Example 7.4]{Osin}. While $G$ is acylindrically hyperbolic by \cite{MO} or \cite[Corollary 1.3 or Theorem 1.1]{osin_betti number}, and observe that $H<G$ is not almost malnormal since $tHt^{-1}\cap H$ is infinite, hence it is not hyperbolically embedded by \cite[Lemma 7.1]{Osin} or \cite[Proposition 2.8]{DGO}.

(3) From $c(ta^nt^{-1})=c(a^m)=0$, we deduce that $c(t)=ta^nt^{-1}c(t)$. Then since $ta^nt^{-1}$ has infinite order, we get $||c(t)||^2=\langle c(t), (ta^nt^{-1})^kc(t) \rangle\to 0$ as $k\to\infty$.
\end{proof}

\begin{proposition}\label{prop on maximal amenable in free product}
Let $H, K$ and $L$ be countable discrete groups. If $H$ is maximal amenable in $K$, and both $K$ and $L$ are torison free. Then $H$ is also maximal amenable in $K*L$. 
\end{proposition}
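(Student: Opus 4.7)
The plan is to use the action of $G = K*L$ on its Bass--Serre tree $T$ (which has trivial edge stabilizers and vertex stabilizers the conjugates of $K$ and $L$) to classify amenable subgroups of $G$, then show that any amenable subgroup containing $H$ must actually lie inside $K$, at which point the hypothesized maximal amenability of $H$ in $K$ finishes the argument. Standard tree dynamics yields the trichotomy: any amenable subgroup $A \leq G$ is either (a) contained in $gKg^{-1}$ for some $g \in G$, or (b) contained in $gLg^{-1}$ for some $g \in G$, or (c) infinite cyclic and generated by a loxodromic isometry of $T$. Indeed, if $A$ has no global fixed vertex on $T$ then two elliptic elements of $A$ with distinct fixed vertices produce a loxodromic element; once $A$ contains a loxodromic $h$, amenability forbids any second loxodromic with a different axis (ping-pong would yield a nonabelian free subgroup), so $A$ preserves the axis of $h$, and the axis-preserving subgroup injects into $\mathbb{Z}$ via translation length (edge stabilizers are trivial, and torsion-freeness of $G$, inherited from $K$ and $L$, rules out the orientation-reversing flip).

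I would first note that $H$ is necessarily infinite: if $K = \{1\}$ the statement is vacuous, otherwise $K$ is torsion-free and nontrivial so contains a copy of $\mathbb{Z}$, and then maximal amenability of $H$ in $K$ together with torsion-freeness of $K$ forces $H$ to be infinite. Now let $A \supseteq H$ be amenable. Case (b) is ruled out because the distinct vertices $v_K$ and $g\cdot v_L$ of $T$ have intersecting stabilizers $K \cap gLg^{-1}$ that fix the edge-path between them, which is trivial by triviality of edge stabilizers; so $H \subseteq K \cap gLg^{-1} = \{1\}$, contradicting $H$ infinite. Case (c) is ruled out because $A = \langle a \rangle$ with $a$ loxodromic on $T$ forces every nonzero power $a^n$ to be loxodromic, yet $H \subseteq A$ is nontrivial cyclic and so $a^n \in H \subseteq K$ for some $n \neq 0$, making $a^n$ elliptic on $T$, a contradiction.

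In the remaining case (a) the same edge-stabilizer argument gives $K \cap gKg^{-1} = \{1\}$ whenever $g \notin K$ (because $v_K \neq g\cdot v_K$ then), so from $H \subseteq K \cap gKg^{-1}$ and $H$ infinite we deduce $g \in K$ and hence $A \subseteq K$. Maximal amenability of $H$ in $K$ then gives $A = H$, as required. I expect the main obstacle to be the tree-theoretic trichotomy classifying amenable subgroups of $G$; one could alternatively invoke the Kurosh subgroup theorem after observing that any Kurosh decomposition of an amenable subgroup with two nontrivial pieces contains a nonabelian free subgroup (using torsion-freeness of $K$ and $L$ so that their nontrivial subgroups contain $\mathbb{Z}$). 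Everything else is a short consequence of triviality of edge stabilizers in the splitting $K*L$, with torsion-freeness of both factors used both to prevent the orientation-reversing flip in case (c) and to force $H$ infinite.
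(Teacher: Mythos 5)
Your proof is correct, but it takes a genuinely different route from the paper's. The paper argues combinatorially with normal forms: it reduces the statement to showing that every $g\in K*L\setminus K$ is free from $K$, i.e.\ $\langle K,g\rangle=K*\langle g\rangle$, proved by a case analysis on reduced words (its Claim 1 shows this freeness is unaffected by multiplying $g$ on either side by elements of $K$, after which $g$ may be taken in a special form); then for $g\notin K$ one gets $\langle H,g\rangle= H*\langle g\rangle\supseteq F_2$, while for $g\in K\setminus H$ maximal amenability of $H$ in $K$ applies directly. You instead classify all amenable subgroups of $K*L$ via the action on the Bass--Serre tree (or, as you note, via Kurosh): with trivial edge stabilizers a nontrivial elliptic subgroup fixes a unique vertex, so an amenable subgroup is conjugate into $K$ or $L$ or is infinite cyclic generated by a loxodromic, and then malnormality of the factors (again trivial edge stabilizers) together with $H$ being infinite pins any amenable $A\supseteq H$ inside $K$ itself, where maximality in $K$ finishes. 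Your approach is more conceptual, avoids the word-by-word analysis, proves the stronger structural fact that every amenable subgroup containing $H$ already lies in $K$, and would generalize (e.g.\ to splittings with finite edge groups); the paper's argument is elementary and self-contained and yields the auxiliary freeness statement as a byproduct. Two small points to tighten: in the ping-pong step you should also rule out a second loxodromic whose axis shares exactly one end with that of $h$ --- with trivial edge stabilizers a suitable product of powers of the two elements fixes a long subray of the common ray, hence is trivial, forcing equal axes, so this case collapses but deserves a sentence (alternatively quote the standard classification of actions on trees without $F_2$); and the case $K=\{1\}$ is not vacuous but a degenerate counterexample to the literal statement when $L\neq\{1\}$ (the trivial group is maximal amenable in itself but not in $L$), so like the paper you are implicitly assuming $H\neq\{1\}$, which is the intended setting and exactly what your ``$H$ infinite'' observation uses.
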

\begin{proof}
First, we observe that it suffices to show $K$ is free from $g$ for every $g\in K*L\setminus K$, i.e. $\langle K, g \rangle=K*\langle g \rangle$. 

To see this, one just check that for all $g\in K*L\setminus H$, $\langle H, g\rangle$ is not amenable. If $g\in K*L\setminus K$, then $\langle H, g \rangle=H*\langle g \rangle\geq F_2$ by assumption. If $g\in K\setminus H$, then $\langle H, g \rangle\subseteq K$ is not amenable since $H<K$ is maximal amenable.

We are left to show for all $g\in K*L\setminus K$, $K$ is free from $g$. 

Claim 1: for every $e\neq k\in K$ and every $g\in K*L\setminus K$, $K$ is free from $g$ if and only if $K$ is free from $kg$.

\begin{proof}[Proof of Claim 1]
By symmetry, it suffices to show $\Rightarrow$ holds.

Suppose $k_1(kg)^{m_1}\cdots k_i(kg)^{m_i}=e$ for some $k_2,\dots  k_i\in K\setminus{\{e\}}, k_1\in K$, $m_1\cdots m_{i-1}\neq 0$ and $m_i\in \mathbb{Z}$. Then, since $K$ is free from $g$, we deduce $|m_1|, \ldots, |m_{i-1}|=1$; otherwise, by looking at the middle word pieces between any two successive $g^{\pm}$, we deduce $k=e$, a contradiction.

Then, we divide the argument into four cases.

Case 1: $m_j=1$ for all $j\in \{1, \ldots, i-1\}$. By freeness, we deduce $k_jk=e$ for all $j\in \{1,\ldots, i-1\}$ and hence $g^{i-1}k_i(kg)^{m_i}=e$. If $m_i=0$ or $m_i=-1$, then $k_i=e$, a contradiction. If $m_i=1$, then $k_ik=e$ and $g^i=e$, this is a contradiction since $K*L$ is torsion free. If $|m_i|\geq 2$, then $k=e$, a contradiction.

Case 2: $m_j=-1$ for all $j\in \{1, \ldots, i-1\}$. The proof is similar to the proof of case 1. 

Case 3: $m_1=1$ and there exists the smallest $j\in \{1,\dots, i-1\}$ such that $m_j=-1$. Then by freeness, we must have $k_j=e$, a contradiction.

Case 4: $m_1=-1$ and there exists  the smallest $j\in \{1,\dots, i-1\}$ such that $m_j=1$. Then by freeness, we must have $k^{-1}k_jk=e$, i.e. $k_j=e$ a contradiction.
\end{proof}  
By Claim 1 and taking inverses it is also clear that for any $e\neq k\in K$, $K$ is free from $g$ if and only if $K$ is free from $gk$. Hence, to prove $g$ is free from $K$, we may assume when written in reduced form, either $g=x$ or $g=xty$, where $x, y\in L\setminus{\{e\}}$ and $e\neq t$ is a reduced word in $G$ with head and tail come from $K$. Then clearly, $g$ is free from $K$.
\end{proof}

\begin{proposition}\label{prop: max amenable in BS(n, n)}
Let $G=BS(m, n)=\langle a, t~|~ ta^mt^{-1}=a^n\rangle$ and $H=\langle a \rangle$. Then $H$ is maximal amenable in $G$ if $|m|, |n|\geq 3$.
\end{proposition}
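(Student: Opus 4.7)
The plan is to analyze the action of $G=BS(m,n)$ on its Bass-Serre tree $T$ arising from the HNN decomposition with vertex group $H=\langle a\rangle$ and edge groups $\langle a^m\rangle,\langle a^n\rangle$. Let $v_0\in T$ be the base vertex, so that $\Stab_G(v_0)=H$. The edges of $T$ incident to $v_0$ split into two $H$-orbits, of sizes $|m|$ and $|n|$, with pointwise stabilizers $\langle a^m\rangle$ and $\langle a^n\rangle$ respectively; both are proper subgroups of $H$ because $|m|,|n|\geq 2$.

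From this, I would then establish two dynamical facts about $H\curvearrowright T$. First, $v_0$ is the unique vertex of $T$ fixed by $H$: the $H$-fixed point set is a subtree containing $v_0$, and if it contained any edge $e$ incident to $v_0$ then one would have $H\subseteq \Stab_G(e)$, which is impossible since the edge stabilizer is $\langle a^m\rangle$ or $\langle a^n\rangle$. Second, $H$ fixes no end of $T$: if $a$ fixed an end $\omega$, then by the uniqueness of geodesic rays in a tree, $a$ would fix pointwise the ray from $v_0$ to $\omega$, in particular fix its initial edge at $v_0$, contradicting the first observation.

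Now suppose, towards a contradiction, that $g\in G\setminus H$ and $\langle H,g\rangle$ is amenable. By the classical fact (a consequence of Bass-Serre theory; see e.g.\ Serre's \emph{Trees}) that a group acting on a tree without a fixed vertex or a fixed end must contain a non-abelian free subgroup, the amenable subgroup $\langle H,g\rangle$ must fix either a vertex or an end of $T$. Since $H\subseteq \langle H,g\rangle$ does not fix any end, $\langle H,g\rangle$ must fix a vertex $v\in T$; then $v$ is in particular fixed by $H$ and so $v=v_0$, forcing $g\in \Stab_G(v_0)=H$, a contradiction.

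The step requiring the most care is the appeal to the amenable-group-on-tree classification; an alternative is a direct ping-pong argument on $T$, exploiting the hypothesis $|m|,|n|\geq 3$ (which gives at least three edges in each $H$-orbit of edges at $v_0$) to produce enough $H$-translates of the initial edge of the geodesic $[v_0,gv_0]$ that a suitable pair $g$ and $hgh^{-1}$ with $h\in H$ satisfies the tree ping-pong and thus generates a free subgroup inside $\langle H,g\rangle$.
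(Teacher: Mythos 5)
Your reduction to the Bass--Serre tree $T$ of the HNN splitting is a reasonable strategy, but the proof as written has a genuine gap, and it sits exactly where the hypothesis $|m|,|n|\geq 3$ must be used. Note that your argument only ever invokes $|m|,|n|\geq 2$, and the statement is false at that level of generality: in $BS(2,2)$ the element $b=tat^{-1}$ satisfies $b^{2}=a^{2}$, which is central, and $\langle a,b\rangle$ is a strictly larger amenable subgroup of $BS(2,2)$ containing $\langle a\rangle$ (its image in $BS(2,2)/\langle a^{2}\rangle\cong(\mathbb{Z}/2\mathbb{Z})*\mathbb{Z}$ is generated by two involutions, hence infinite dihedral, and the kernel is central). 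The faulty step is the ``classical fact'' you quote: a group acting on a tree with no fixed vertex and no fixed end need \emph{not} contain a nonabelian free subgroup; the correct alternative (Tits/Pays--Valette) has a further case, namely that the group preserves a \emph{pair} of ends, i.e.\ stabilizes a bi-infinite line and may act on it dihedrally --- the infinite dihedral group acting on a line is the basic example, and this is precisely what $\langle a,tat^{-1}\rangle$ does on the tree of $BS(2,2)$. (The edge-inversion case of the classification is harmless here, since Bass--Serre actions are without inversions.) So your case analysis for $E=\langle H,g\rangle$ is incomplete, and since $H$ does sit inside dihedral-type line stabilizers when $|m|=2$ or $|n|=2$, the missing case cannot simply be waved away.

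The gap can be closed, and doing so is where $|m|,|n|\geq 3$ enters: suppose $E$ stabilizes the line $\ell$ joining a pair of ends. Since $a$ fixes $v_{0}$ and preserves $\ell$, it fixes the projection $p$ of $v_{0}$ to $\ell$; if $p\neq v_{0}$ then $a$ fixes the geodesic $[v_{0},p]$ pointwise, in particular an edge at $v_{0}$, which is impossible since the edge stabilizers at $v_{0}$ are the $H$-conjugates of $\langle a^{m}\rangle$ and $\langle a^{n}\rangle$. Hence $v_{0}\in\ell$, and $a$ either fixes the two ends of $\ell$ (excluded by your end argument) or swaps them; in the latter case $a^{2}$ fixes $\ell$ pointwise, so $a^{2}$ lies in $\langle a^{m}\rangle$ or $\langle a^{n}\rangle$, which is impossible precisely because $|m|,|n|\geq 3$. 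With the alternative quoted correctly (fix a vertex, fix an end, preserve a pair of ends, or contain $F_{2}$) and this extra case added, your argument becomes a complete and rather soft proof. It is genuinely different from the paper's, which works combinatorially with the HNN normal form and Britton's lemma to exhibit an explicit free subgroup $\langle gag^{-1}a,\,agag^{-1}\rangle$ inside $\langle a,g\rangle$ for every $g\in G\setminus H$; your closing suggestion of a direct ping-pong on $T$ is essentially a geometric version of that computation, but as stated it is only a sketch.
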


\begin{proof}

It suffices to prove $K:=\langle g, a \rangle$ contains free group $F_2$ for every $g\in G\setminus H$.

By the normal form theorem for HNN extension \cite[Page 182]{ls}, for every $e\neq g\in G$, we may write $g$ in reduced normal form, i.e. $g=a^{i_0}t^{\epsilon_1}a^{i_1}t^{\epsilon_2}\cdots t^{\epsilon_k}a^{i_k}$, where $\epsilon_i\in \{\pm 1\}$ and no substrings of the form $ta^{m*}t^{-1}$ or $t^{-1}a^{n*}t$ appear, where $m*$ (respectively, $n*$) denotes any integer divisible by $m$ (respectively, $n$). Moreover, if $\epsilon_j=1$ for some $1\leq j\leq k$, then $0\leq i_{j}<m$; similarly, if $\epsilon_j=-1$ for some $1\leq j\leq k$, then $0\leq i_j<n$.

Notice that $K=\langle a^{-i_0}ga^{-i_k}, a \rangle$ and $a^{-i_0}ga^{-i_k}\in G\setminus H$, so without loss of generality, we may assume that $g=t^{\epsilon_1}a^{i_1}t^{\epsilon_2}\cdots t^{\epsilon_k}$ in reduced normal form.

Then, using Britton's lemma (see \cite{britton} or \cite[Page 181]{ls}), one can check that $gag^{-1}a$ is free from $agag^{-1}$ and both have infinite order if $|m|, |n|\geq 3$; in other words, $F_2\cong \langle gag^{-1}a, agag^{-1} \rangle\subseteq K$. 
\end{proof}

Despite the existence of the above examples, we also have examples showing that some maximal amenable but not hyperbolically embeded subgroups may give rise to maximal amenable group von Neumann algebras. Indeed, let $G=(\mathbb{Z}\times F_2)*F_2=(\langle a \rangle\times \langle b, c \rangle)*F_2$, $K=\mathbb{Z}\times F_2=\langle a \rangle\times \langle b, c \rangle$ and $H=\mathbb{Z}^2=\langle a, b \rangle$. Since $\langle a\rangle\subseteq cHc^{-1}\cap H$ is infinite, $H$ is not almost malnormal; therefore it is not hyperbolically embeded in the acylindrically hyperbolic group $G$. While $LH$ is maximal amenable in $LK$ by \cite[Theorem 2.4]{remi_carderi2}, hence $LH$ is still maximal amenable in $LG=LK*LF_2$ since any amenable subalgebra (in $LG$) containing $LH$ is contained in $LK$ by \cite{hou} or \cite{ozawa}.

\begin{bibdiv}
\begin{biblist}
\bib{remi_carderi2}{article}{
   author={Boutonnet, R.},
   author={Carderi, A.},
   title={Maximal amenable von Neumann subalgebras arising from maximal
   amenable subgroups},
   journal={Geom. Funct. Anal.},
   volume={25},
   date={2015},
   number={6},
   pages={1688--1705},
   }
\bib{remi_carderi1}{article}{
   author={Boutonnet, R.},
   author={Carderi, A.},
   title={Maximal amenable subalgebras of von Neumann algebras associated
   with hyperbolic groups},
   journal={Math. Ann.},
   volume={367},
   date={2017},
   number={3-4},
   pages={1199--1216},}

\bib{britton}{article}{
   author={Britton, J. L.},
   title={The word problem},
   journal={Ann. of Math. (2)},
   volume={77},
   date={1963},
   pages={16--32},}

\bib{brown}{book}{
   author={Brown, K. S.},
   title={Buildings},
   publisher={Springer-Verlag, New York},
   date={1989},
   pages={viii+215},}

\bib{CMSZ}{article}{
   author={Cartwright, D. I.},
   author={Mantero, A. M.},
   author={Steger, T.},
   author={Zappa, A.},
   title={Groups acting simply transitively on the vertices of a building of
   type $\widetilde{A}_2$. I},
   journal={Geom. Dedicata},
   volume={47},
   date={1993},
   number={2},
   pages={143--166},}

\bib{CMSZ2}{article}{
   author={Cartwright, D. I.},
   author={Mantero, A. M.},
   author={Steger, T.},
   author={Zappa, A.},
   title={Groups acting simply transitively on the vertices of a building of
   type $\widetilde{A}_2$. II. The cases $q=2$ and $q=3$},
   journal={Geom. Dedicata},
   volume={47},
   date={1993},
   number={2},
   pages={167--223},}

\bib{CMS}{article}{
   author={Cartwright, D. I.},
   author={M\l otkowski, W.},
   author={Steger, T.},
   title={Property (T) and $\tilde{A_2}$ groups},
   language={English, with English and French summaries},
   journal={Ann. Inst. Fourier (Grenoble)},
   volume={44},
   date={1994},
   number={1},
   pages={213--248},}

\bib{DGO}{article}{
   author={Dahmani, F.},
   author={Guirardel, V.},
   author={Osin, D.},
   title={Hyperbolically embedded subgroups and rotating families in groups
   acting on hyperbolic spaces},
   journal={Mem. Amer. Math. Soc.},
   volume={245},
   date={2017},
   number={1156},
   pages={v+152},}

\bib{Dix}{article}{
   author={Dixmier, J.},
   title={Sous-anneaux ab\'{e}liens maximaux dans les facteurs de type fini},
   journal={Ann. of Math. (2)},
   volume={59},
   date={1954},
   pages={279--286},}

\bib{Hamann}{article}{
   author={Hamann, M.},
   title={Group actions on metric spaces: fixed points and free subgroups},
   journal={Abh. Math. Semin. Univ. Hambg.},
   volume={87},
   date={2017},
   number={2},
   pages={245--263},}

\bib{hou}{article}{
   author={Houdayer, C.},
   title={Gamma stability in free product von Neumann algebras},
   journal={Comm. Math. Phys.},
   volume={336},
   date={2015},
   number={2},
   pages={831--851},}

\bib{JR}{article}{
   author={Jolissaint, P.},
   author={Robertson, G.},
   title={Simple purely infinite $C^\ast$-algebras and $n$-filling actions},
   journal={J. Funct. Anal.},
   volume={175},
   date={2000},
   number={1},
   pages={197--213},}

\bib{kms}{article}{
   author={Karrass, A.},
   author={Magnus, W.},
   author={Solitar, D.},
   title={Elements of finite order in groups with a single defining
   relation. },
   journal={Comm. Pure Appl. Math.},
   volume={13},
   date={1960},
   pages={57--66},}

\bib{ls}{book}{
   author={Lyndon, Roger C.},
   author={Schupp, Paul E.},
   title={Combinatorial group theory},
   series={Classics in Mathematics},
   note={Reprint of the 1977 edition},
   publisher={Springer-Verlag, Berlin},
   date={2001},
   pages={xiv+339},}

\bib{MO}{article}{
   author={Minasyan, A.},
   author={Osin, D.},
   title={Acylindrical hyperbolicity of groups acting on trees},
   journal={Math. Ann.},
   volume={362},
   date={2015},
   number={3-4},
   pages={1055--1105},}

\bib{osin_betti number}{article}{
   author={Osin, D.},
   title={On acylindrical hyperbolicity of groups with positive first
   $\ell^2$-Betti number},
   journal={Bull. Lond. Math. Soc.},
   volume={47},
   date={2015},
   number={5},
   pages={725--730},}

\bib{Osin}{article}{
   author={Osin, D.},
   title={Acylindrically hyperbolic groups},
   journal={Trans. Amer. Math. Soc.},
   volume={368},
   date={2016},
   number={2},
   pages={851--888},}
  
\bib{ozawa}{article}{
   author={Ozawa, N.},
   title={A remark on amenable von Neumann subalgebras in a tracial free
   product},
   journal={Proc. Japan Acad. Ser. A Math. Sci.},
   volume={91},
   date={2015},
   number={7},
   pages={104},}

%

\bib{ja}{article}{
   author={Peterson, J.},
   author={Thom, A.},
   title={Group cocycles and the ring of affiliated operators},
   journal={Invent. Math.},
   volume={185},
   date={2011},
   number={3},
   pages={561--592},}

\bib{popa_adv}{article}{
   author={Popa, S.},
   title={Maximal injective subalgebras in factors associated with free
   groups},
   journal={Adv. in Math.},
   volume={50},
   date={1983},
   number={1},
   pages={27--48},}

\bib{RR}{article}{
   author={Ramagge, J.},
   author={Robertson, G.},
   title={Triangle buildings and actions of type ${\rm III}_{1/q^2}$},
   journal={J. Funct. Anal.},
   volume={140},
   date={1996},
   number={2},
   pages={472--504},}

\bib{R2006}{article}{
   author={Robertson, G.},
   title={Abelian subalgebras of von Neumann algebras from flat tori in
   locally symmetric spaces},
   journal={J. Funct. Anal.},
   volume={230},
   date={2006},
   number={2},
   pages={419--431},}

\bib{RSS}{article}{
   author={Robertson, G.},
   author={Sinclair, A. M.},
   author={Smith, R. R.},
   title={Strong singularity for subalgebras of finite factors},
   journal={Internat. J. Math.},
   volume={14},
   date={2003},
   number={3},
   pages={235--258},}
   
\bib{RS1996}{article}{
   author={Robertson, G.},
   author={Steger, T.},
   title={Maximal abelian subalgebras of the group factor of an $\widetilde{A}_2$
   group},
   journal={J. Operator Theory},
   volume={36},
   date={1996},
   number={2},
   pages={317--334},}

\bib{ronan}{book}{
   author={Ronan, M.},
   title={Lectures on buildings},
   series={Perspectives in Mathematics},
   volume={7},
   publisher={Academic Press, Inc., Boston, MA},
   date={1989},
   pages={xiv+201},}
   
 \bib{SS}{article}{
   author={Sinclair, A. M.},
   author={Smith, R. R.},
   title={Strongly singular masas in type $\rm II_1$ factors},
   journal={Geom. Funct. Anal.},
   volume={12},
   date={2002},
   number={1},
   pages={199--216},}  

\bib{SSWW}{article}{
   author={Sinclair, A. M.},
   author={Smith, R. R.},
   author={White, S. A.},
   author={Wiggins, A.},
   title={Strong singularity of singular masas in ${\rm II}_1$ factors},
   journal={Illinois J. Math.},
   volume={51},
   date={2007},
   number={4},
   pages={1077--1084},}

\bib{Sun}{article}{
author={Sun, B.},
title={A dynamical characterization of acylindrically hyperbolic groups},
status={arXiv: 1707.04587},
}

\end{biblist}
\end{bibdiv}
\end{document}